\newenvironment{psmallmatrix}
  {\left(\begin{smallmatrix}}
  {\end{smallmatrix}\right)}
\newtheorem{thm}{Theorem}[section]
\newtheorem{prop}{Proposition}[section]
\newtheorem{lem}{Lemma}[section]
\newtheorem{rem}{Remark}
\theoremstyle{definition}
\newtheorem{exa}{Example}[section]
\numberwithin{equation}{section}
\newcommand\nolabel[1]{\nonumber}
\newcommand\R{\mathbb{R}}
\renewcommand\P{\mathscr{P}}
\newcommand\N{\mathbb{N}}
\newcommand\Q{\mathbb{Q}}
\newcommand\Z{\mathbb{Z}}
\newcommand\id{\mathrm{id}}
\newcommand{\QA}[1]{\mathscr{A}^{[#1]}}
\newcommand{\D}{\Delta}
\newcommand{\norma}[1]{\left\| #1 \right\| }
\newcommand{\abs}[1]{\left| #1 \right| }
\DeclareMathOperator{\sign}{sign}
\DeclareMathOperator{\cl}{cl}
\DeclareMathOperator*{\interior}{int}
\newcommand{\Lv}{\mathscr{L}}
\newcommand{\M}{\mathscr{M}}
\newcommand{\G}{\mathscr{G}}
\renewcommand{\D}{\mathscr{D}}
\newcommand{\B}{\mathscr{B}}
\newcommand{\TYPE}[1]{\mathcal{E}_{#1}}
\newcommand{\TYPEP}[1]{\mathcal{E}_{#1}^+}
\numberwithin{equation}{section}
\def\eq#1{{\rm(\ref{#1})}}
\def\Eq#1#2{\ifthenelse{\equal{#1}{*}}
  {\begin{equation*}\begin{aligned}[]#2\end{aligned}\end{equation*}}
  {\begin{equation}\begin{aligned}[]\label{#1}#2\end{aligned}\end{equation}}}
\author[P. Pasteczka]{Pawe\l{} Pasteczka}
\address{Institute of Mathematics \\ Pedagogical University of Krak\'ow \\ Podchor\k{a}\.zych str. 2, 30-084 Krak\'ow, Poland}
\email{pawel.pasteczka@up.krakow.pl}
    \subjclass[2010]{26E60, 68Q45, 68Q70}
\keywords{Bajraktarevi\'c means, quasi-arithmetic means, complexity, regular languages, axiomatization, online algorithms}
\title{Online premeans and their computation complexity}
\begin{document}
\begin{abstract}
 We extend some approach to a family of symmetric means (i.e. symmetric functions $\mathscr{M} \colon \bigcup_{n=1}^\infty I^n \to I$ with $\min\le \mathscr{M}\le \max$; $I$ is an interval). 
Namely, it is known that every symmetric mean can be written in a form $\mathscr{M}(x_1,\dots,x_n):=F(f(x_1)+\cdots+f(x_n))$, where 
 $f \colon I \to G$ and $F \colon G \to I$ ($G$ is a commutative semigroup).  

For $G=\mathbb{R}^k$ or $G=\mathbb{R}^k \times \mathbb{Z}$ ($k \in \mathbb{N}$) and continuous functions $f$ and $F$ we obtain two series of families (depending on $k$). It can be treated as a measure of complexity in a family of means (this idea is inspired by theory of regular languages and algorithmics).
 
As a result we characterize celebrated families of quasi-arithmetic means ($G=\mathbb{R}\times \mathbb{Z}$) and Bajraktarevi\'c means ($G=\mathbb{R}^2$ under some additional assumptions). Moreover, we establish certain estimations of complexity for several other classical families.
 
\end{abstract}

\maketitle

\section{Introduction}

In most cases means are defined using explicit formulas. In fact there are only few 
general approaches to this topic. One of the most famous are so-called Chisini means (or level-surface means) \cite{Chi29} which allows to express all reflexive means in a unified form.

We provide alternative way of defining means based on some ideas emerging from the theory of regular languages. Our results bind two different scopes which, to the best of author's knowledge, were not considered together earlier. 
Due to this fact introduction is divided into few parts which are devoted to means (sec. 1.1 and 1.2), regular languages (sec. 1.3), and some algorithmic approach to solving problems (sec. 1.4).

\subsection{Means and premeans} We call $\M \colon \bigcup_{n=1}^\infty I^n \to I$ to be a \emph{mean} (or a \emph{mean on $I$} to emphasize its domain) if $\min(a) \le \M(a) \le  \max(a)$  for all $a \in \bigcup_{n=1}^\infty I^n$ (it is often called simply \emph{mean property}). 
It implies that $\M$ is \emph{reflexive}, i.e.
\Eq{*}{
\M(\underbrace{v,\dots,v}_{k\text{ times}})=v\qquad (v \in I,\,k \in \N)\:.
}

Reflexive functions are used to be called \emph{premeans} (see Matkowski \cite{Mat06b}).

Sometimes we restrict a domain of $\M$ to $I^n$ for some $n \in \N$ and we say about \emph{$n$-variable} mean (which is formally not a mean from the point of view of the previous definition). In particular, there are many $2$-variable means that have no obvious extensions to general $n$-tuples, for example Cauchy or Heronian means (see \cite[section~VI.2]{Bul03} for details and more examples).

Let us also recall \emph{level surface means} (see \cite[section~VI.4.1]{Bul03} and references therein). Let $F\colon I^n \to \R$ then the \emph{$F$-level mean} of $a=(a_1\dots,a_n) \in I^n$ equals $\mu$, where
\Eq{*}{
F(\mu, ...,\mu) = F(a_1....a_n),
}
provided $F$ is such that $\mu$ is uniquely determined for all $a \in I^n$.

Note that if $F$ is a premean then $F$-level mean equals $F$. Therefore every premean restricted to $I^n$ ($n \in \N$) is a level surface mean. 
Due to this fact level surface means are considered as the way of thinking or the way of expressing means rather than the family. Online premeans are in the same flavour (compare Remark~\ref{rem:premean}) but, conversely to level surface mean, we redefine the family of all symmetric premeans.

Now we recall few properties of premeans. We say that $\M \colon \bigcup_{n=1}^\infty I^n \to I$ is \emph{continuous} (\emph{symmetric}) if for all $n \in \N$ its restriction $\M|_{I^n}$ is continuous (symmetric).
For $I=\R_+$ we can define \emph{homogeneity} in the same way. Premean $\M$ is called \emph{repetition invariant} if, for all $n,m\in\N$
and $(x_1,\dots,x_n)\in I^n$, the following identity is satisfied
\Eq{*}{
  \M(\underbrace{x_1,\dots,x_1}_{m\text{-times}},\dots,\underbrace{x_n,\dots,x_n}_{m\text{-times}})
   =\M(x_1,\dots,x_n).
}
This property was introduced, in a mean setting, by P\'ales-Pasteczka \cite{PalPas16}.

Finally, element $e \in I$ is called \emph{negligible element of $\M$} if for every vector $a=(a_1,\dots,a_n)$ ($n \ge 2$) such that $a_s=e$ for some $s \in \{1,\dots,n\}$ we have $\M(a)=\M((a_{i})_{i \in \{1,\dots,n\}\setminus \{s\}})$. In the other words element $e$ does not affect to a value of mean unless $a$ is a constant vector having all entries equal to $e$. Obviously each mean has at most one negligible element.

\subsection{Selected families of means} In this section we introduce four closely related families of means.

Define for $p\in\R$ the $p$th power (or H\"older) mean of the positive numbers $x_1,\dots,x_n$ by
\Eq{*}{
  \P_p(x_1,\dots,x_n)
   :=\left\{\begin{array}{ll}
    \Big(\dfrac{x_1^p+\cdots+x_n^p}{n}\Big)^{\frac{1}{p}} 
      &\mbox{if }p\neq0, \\[3mm]
      \sqrt[n]{x_1\cdots x_n}\qquad
      &\mbox{if }p=0.
    \end{array}\right.
}

Let us now indroduce a family which was defined in 1920s/30s \cite{Kno28,Kol30,Nag30,Def31}. Let $I\subseteq\R$ be an interval and $f\colon I\to\R$ be a continuous strictly monotonic function then the quasi-arithmetic 
mean $\QA{f}:\bigcup_{n=1}^\infty I^{n}\to I$ is defined by
\Eq{*}{
  \QA{f}(x_1,\dots,x_n)
   :=f^{-1}\bigg(\frac{f(x_1)+\cdots+f(x_n)}{n}\bigg),
   \qquad x_1,\dots,x_n\in I.
}
By taking $f$ as a power function or a logarithmic function on $I=\R_+$, the resulting quasi-arithmetic mean is a power 
mean. 


Another extension of power means was proposed in 1938 by Gini \cite{Gin38}. For $p,q\in\R$, the \emph{Gini mean} $\G_{p,q}$ of 
the variables $x_1,\dots,x_n>0$ is defined as follows:
\Eq{GM}{
  \G_{p,q}(x_1,\dots,x_n)
   :=\left\{\begin{array}{ll}
    \left(\dfrac{x_1^p+\cdots+x_n^p}
           {x_1^q+\cdots+x_n^q}\right)^{\frac{1}{p-q}} 
      &\mbox{if }p\neq q, \\[4mm]
     \exp\left(\dfrac{x_1^p\ln(x_1)+\cdots+x_n^p\ln(x_n)}
           {x_1^p+\cdots+x_n^p}\right) \quad
      &\mbox{if }p=q.
    \end{array}\right.
}
Clearly, in the particular case $q=0$, the mean $\G_{p,q}$ reduces to the $p$th power mean $\P_p$. It is also 
obvious 
that $\G_{p,q}=\G_{q,p}$. 

A common generalization of quasi-arithmetic means and Gini means can be obtained in terms of 
two arbitrary real functions. These idea was realized by Bajraktarevi\'c \cite{Baj58}, \cite{Baj69} in 1958. Let 
$I\subseteq\R$ be an interval and let $f,g:I\to\R$ be continuous functions such that $g$ is positive and $f/g$ is 
strictly monotone. Define the \emph{Bajraktarevi\'c mean} $\B_{f,g}:\bigcup_{n=1}^\infty I^{n}\to I$ by
\Eq{BM}{
  \B_{f,g}(x_1,\dots,x_n)
   :=\Big(\frac{f}{g}\Big)^{-1}\bigg(\frac{f(x_1)+\cdots+f(x_n)}
                      {g(x_1)+\cdots+g(x_n)}\bigg),
                      \qquad x_1,\dots,x_n\in I.
}
One can check that $\B_{f,g}$ is a mean on $I$. In the particular case $g\equiv1$, the mean $\B_{f,g}$ reduces to 
$\QA{f}$, that is, the class of Bajraktarevi\'c means is more general than that of the quasi-arithmetic means. By putting 
$(f,g)=(x^p,x^q)$ or $(f,g)=(x^p \ln(x),x^p)$ we can see that Gini means are Bajraktarevi\'c means. 

Let us emphasize that Bajraktarevi\'c means are repetition invariant and have no negligible element.
Moreover, there are following properties binding these four families:
\begin{enumerate}[i.]
 \item power means are the only homogeneous quasi-arithmetic means (cf.\ \cite{HarLitPol34}, \cite{Pal00a}, \cite{Pas15a});
 \item every quasi-arithmetic mean is a Bajraktarevi\'c mean;
 \item Gini means are the only homogeneous Bajraktarevi\'c means \cite{AczDar63c};
 \item means which are simultaneously quasi-arithmetic and Gini means are exactly power means.
\end{enumerate}

The are three more families of means which will be of our interest -- Hamy means, Symmetric polynomial means and Biplanar means -- we will introduce them in section~\ref{sec:hightypes}.

\medskip 
Finally, let us mention that the family of Bajraktarevi\'c means can be generalized to so-called quasideviation means. We call a two-variable function $E \colon I \times I \to I$ to be \emph{quasi-deviation} if, 
\begin{enumerate}[(a)]
 \item $\sign(E(x,y))=\sign(x-y)$,
 \item for all $x\in I$, the map $y\mapsto E(x,y)$ is continuous and,
 \item for all $x<y$ in $I$, the mapping $(x,y)\ni t\mapsto \frac{E(y,t)}{E(x,t)}$ is strictly increasing.
\end{enumerate}

For a given quasideviation $E$ we define a \emph{quasideviation mean} $\D_E\colon \bigcup_{n=1}^\infty I^n \to I$ at a vector $x=(x_1,\dots,x_n) \in I^n$ as a unique zero of the mapping $I \ni y \mapsto \sum_{i=1}^n E(x_i,y)$ (cf.\ \cite{Pal87d}). 

In fact P\'ales \cite{Pal87d} delivered two characterizations of this family which will be presented in a subsequent propositions
\begin{prop}\label{prop:QD1}
 Let $I$ be an interval. Function $\M \colon \bigcup_{n=1}^\infty I^n \to I$ is a quasideviation mean if and only if all of the following conditions is satisfied 
 \begin{enumerate}[(i)]
  \item \label{QD1-1} $\M$ is strict, i.e. $\min(v)\le \M(v)\le \max(v)$ for all $v \in \bigcup_{n=1}^\infty I^n$ and equalities hold only for a constant vector $v$;
  \item \label{QD1-2} $\M$ is symmetric, i.e. all $n$-variable restriction $\M|_{I^n}$ a is symmetric function;
  \item \label{QD1-3} $\M$ is infinitesimal, i.e. 
  \Eq{*}{
  \lim_{k \to \infty} \max_{m \in\{1,\dots,k\}} |\M(\underbrace{x,\dots,x}_{m\text{ times}},\underbrace{y,\dots,y}_{k-m\text{ times}})-
  \M(\underbrace{x,\dots\dots,x}_{m-1\text{ times}},\underbrace{y,\dots\dots,y}_{k-m+1\text{ times}})|=0\,;
  }
  \item \label{QD1-4} for all $k \in \N$ and all vectors $\vec{x_1},\dots,\vec{x_k} \in \bigcup_{n=1}^\infty I^n$, 
  \Eq{*}{
  \min_{i \in \{1,\dots,k\}} \M(\vec{x_i}) < \M(\vec{x_1},\dots,\vec{x_k})< \max_{i \in \{1,\dots,k\}} \M(\vec{x_i})
  }
  unless all $\M(\vec{x_i})$-s are equal (coma stands for a concatenation of vectors), in the latter case this inequality becomes an equality.
 \end{enumerate}
\end{prop}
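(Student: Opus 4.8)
The plan is to prove the two implications of the equivalence separately. The forward implication, that every quasideviation mean satisfies (i)--(iv), is largely bookkeeping once one invokes the structural fact --- established in \cite{Pal87d}, and already needed for $\D_E$ to be well defined --- that for every $a=(a_1,\dots,a_n)\in I^n$ the scalar function $\psi_a(y):=\sum_{i=1}^nE(a_i,y)$ changes sign exactly once, being positive on the part of $I$ below $\D_E(a)$ and negative on the part above. The converse implication, that (i)--(iv) force $\M$ to be a quasideviation mean, is the substantive one: here a quasideviation $E$ must be manufactured out of $\M$ itself.

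\emph{Necessity.} Let $\M=\D_E$ and keep the notation $\psi_a$. Symmetry~(ii) is clear since $\psi_a$ depends only on the multiset of entries of $a$. For~(i), evaluate $\psi_a$ at $y=\min a$: by~(a) this is a sum of nonnegative terms which vanishes only when $a$ is constant, so for nonconstant $a$ we get $\min a<\D_E(a)$, and $\D_E(a)<\max a$ by the symmetric argument, while constant vectors give equalities by reflexivity. For~(iv), write $\mu_i:=\M(\vec{x_i})$ and note $\psi_{(\vec{x_1},\dots,\vec{x_k})}=\sum_{i=1}^k\psi_{\vec{x_i}}$; at $y=\min_i\mu_i$ every summand is $\ge0$, and unless all $\mu_i$ agree some summand is strictly positive, so $\M(\vec{x_1},\dots,\vec{x_k})>\min_i\mu_i$ --- dually for the upper bound, and with equality throughout when the $\mu_i$ coincide. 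Finally~(iii): fix $x<y$ in $I$; by~(c) the map $t\mapsto E(y,t)/E(x,t)$ is a strictly increasing homeomorphism of $(x,y)$ onto $(-\infty,0)$, so its reciprocal extends to a homeomorphism $S\colon[x,y]\to[-\infty,0]$ with $S(x)=0$, $S(y)=-\infty$, and since $\M(\underbrace{x,\dots,x}_{m},\underbrace{y,\dots,y}_{k-m})$ solves $mE(x,\cdot)+(k-m)E(y,\cdot)=0$ it equals $S^{-1}(-(k-m)/m)$; feeding this through the homeomorphism $s\mapsto -s/(1-s)$ of $[-\infty,0]$ onto $[0,1]$ exhibits it as $g(m/k)$ for a single homeomorphism $g\colon[0,1]\to[x,y]$ depending only on $x,y$ and $E$. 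Hence the quantity in~(iii) is bounded by the modulus of continuity $\omega_g(1/k)$, which tends to $0$.

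\emph{Sufficiency.} Assume now (i)--(iv). First, using~(i),~(ii) and the $k=2$ case of~(iv) one shows $\M$ is strictly increasing in each variable. Next, the equality clause of~(iv) applied to several identical blocks of the form $(x,\dots,x,y,\dots,y)$ forces $\M(\underbrace{x,\dots,x}_{m},\underbrace{y,\dots,y}_{k-m})$ to depend only on the ratio $m/k$; together with strict monotonicity and the infinitesimality~(iii), which rules out any jump, this yields a continuous strictly monotone \emph{weighted two-variable mean} $W_{x,y}\colon[0,1]\to[x\wedge y,\,x\vee y]$ with $W_{x,y}(0)=y$, $W_{x,y}(1)=x$. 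Its inverse $\Lambda_{x,y}$ assigns to each $t$ strictly between $x$ and $y$ a weight in $(0,1)$, and since any deviation generating $\M$ must obey $\Lambda E(x,t)+(1-\Lambda)E(y,t)=0$, the quotient $E(x,t)/E(y,t)=-(1-\Lambda_{x,y}(t))/\Lambda_{x,y}(t)$ is forced. A consistency check tying the pairs $(x,y)$ together --- itself extracted from~(iv) --- determines each slice $E(\,\cdot\,,t)$ up to a positive factor, and a suitable, locally patched choice of that factor produces a jointly continuous $E$ with $\sign E(x,y)=\sign(x-y)$ (from strict monotonicity of the $W_{x,y}$) and with $y\mapsto E(x,y)$ continuous (from~(iii)). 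It then remains to verify axiom~(c) for $E$ and the identity $\D_E=\M$: the former follows from strict monotonicity of $W_{x,y}$ and density of the rational weights, and the latter from~(iv), which localizes the value of $\M$ at an arbitrary vector to iterated two-block comparisons that the construction of $E$ handles directly.

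\emph{The main obstacle} lies entirely in the converse. Verifying axiom~(c) --- converting the \emph{global} internality~(iv) into the \emph{pointwise} strict monotonicity of $t\mapsto E(y,t)/E(x,t)$ --- is the technical heart, and the step I expect to cost the most work. Producing a genuinely \emph{jointly continuous} $E$ from the slice-wise data is also delicate, precisely because $\M$ is not assumed to have any smoothness: the consistency argument and the patching of the gauge near the diagonal must be carried out with care. By contrast, deriving $\D_E=\M$ once $E$ is in hand is comparatively routine, although it still uses~(iv) essentially to reduce the mean to pairwise information.
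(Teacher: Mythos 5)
The paper itself does not prove this proposition---it is quoted from P\'ales \cite{Pal87d}---so there is no in-paper argument to compare against; I can only assess your proposal on its own terms. Your \emph{necessity} half is essentially correct and complete: granting the sign-change structure of $\psi_a(y)=\sum_i E(a_i,y)$ (positive below the root, negative above), the verifications of (i), (ii), (iv) are exactly the evaluate-at-$\min_i\mu_i$ argument you give, and your derivation of (iii) by writing $\M(x^{(m)},y^{(k-m)})$ as $g$ applied to a rational with denominator $k$, for a single homeomorphism $g\colon[0,1]\to[x,y]$ built from $t\mapsto E(x,t)/E(y,t)$, is a clean and correct way to get uniform infinitesimality from the modulus of continuity of $g$.

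The \emph{sufficiency} half, however, is not a proof, and it opens with a step that is outright false: you claim that (i), (ii) and the $k=2$ case of (iv) yield that $\M$ is strictly increasing in each variable. Since every quasideviation mean satisfies (i)--(iv) (your own necessity half), this would force all quasideviation means to be coordinatewise increasing; but the Gini mean $\G_{2,1}(x_1,\dots,x_n)=\bigl(\sum x_i^2\bigr)/\bigl(\sum x_i\bigr)$ is a Bajraktarevi\'c, hence quasideviation, mean whose partial derivative in $x_1$ is negative when $x_1$ is small relative to the other entries. What is true, and what you actually need, is monotonicity of the \emph{weighted} two-point mean $W_{x,y}$ in the weight, which does follow from (i) and the equality/strict clauses of (iv); you should replace the false claim by that statement and prove it. More seriously, everything after the construction of $W_{x,y}$---the ``consistency check tying the pairs $(x,y)$ together,'' the ``suitable, locally patched choice'' of the positive gauge giving a jointly continuous $E$, the verification of axiom (c), and the identity $\D_E=\M$---is announced rather than argued, and you yourself identify these as the technical heart. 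As it stands the converse implication, which is the entire content of P\'ales's theorem, remains unproved; the proposal is a plausible outline of the necessity direction plus a to-do list for the sufficiency direction.
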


\begin{prop}\label{prop:QD2}
 Let $I$ be an interval. Function $\M \colon \bigcup_{n=1}^\infty I^n \to I$ is a quasideviation mean if and only if all of the following conditions is satisfied 
 \begin{enumerate}[(i)]
  \item \label{QD2-1} $\M$ is reflexive, i.e. $\M(\underbrace{x,\dots,x}_{n\text{ times}})=x$ for all $x \in I$ and $n \in \N$;
  \item \label{QD2-2} $\M$ is symmetric;
  \item \label{QD2-3} for all $x,y,u,v \in I$ with $x<u<v<y$ there exist $n,m \in \N$ such that 
  \Eq{*}{
  u < \M(\underbrace{x,\dots,x}_{n\text{ times}},\underbrace{y,\dots,y}_{m\text{ times}})<v;
  }
  \item \label{QD2-4} for every vectors $\vec{x},\vec{y} \in \bigcup_{n=1}^\infty I^n$ with $\M(\vec{x}) < \M(\vec{y})$ we have $\M(\vec{x})<\M(\vec{x},\vec{y})<\M(\vec{y})$;
    \item \label{QD2-5} for every vectors $\vec{x},\vec{y} \in \bigcup_{n=1}^\infty I^n$ with $\M(\vec{x}) = \M(\vec{y})$ we have $\M(\vec{x},\vec{y})=\M(\vec{y})$.
 \end{enumerate}
\end{prop}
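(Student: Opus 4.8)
The plan is to obtain Proposition~\ref{prop:QD2} from Proposition~\ref{prop:QD1}: I will show that, for a symmetric $\M\colon\bigcup_{n\ge1}I^n\to I$, the conditions \ref{QD2-1}--\ref{QD2-5} are together equivalent to \ref{QD1-1}--\ref{QD1-4}, and then invoke Proposition~\ref{prop:QD1}. Symmetry occurs in both lists, so it may be kept fixed throughout.

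First, \ref{QD1-1}--\ref{QD1-4} imply \ref{QD2-1}--\ref{QD2-5}, which is short. Applying strictness \ref{QD1-1} to a constant vector gives reflexivity \ref{QD2-1}. Taking $k=2$ in \ref{QD1-4} yields, when $\M(\vec x)<\M(\vec y)$, exactly \ref{QD2-4}, and, when $\M(\vec x)=\M(\vec y)$, together with symmetry exactly \ref{QD2-5}. The one point needing an argument is \ref{QD2-3}. Fix $x<u<v<y$ and set $c_m:=\M(\underbrace{x,\dots,x}_{m},\underbrace{y,\dots,y}_{k-m})$ for $0\le m\le k$; by reflexivity $c_0=y$ and $c_k=x$, while \ref{QD1-3} says $\varepsilon_k:=\max_{1\le m\le k}|c_m-c_{m-1}|\to 0$. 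Pick $k$ with $\varepsilon_k<v-u$ and let $m^\ast:=\max\{m:c_m\ge v\}$; this set contains $0$ but not $k$, so $m^\ast<k$, $c_{m^\ast+1}<v$, and $c_{m^\ast+1}>c_{m^\ast}-\varepsilon_k\ge v-\varepsilon_k>u$, that is $\M(x^{m^\ast+1},y^{k-m^\ast-1})\in(u,v)$.

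Conversely, assume \ref{QD2-1}--\ref{QD2-5}. Strictness \ref{QD1-1} follows by induction on the arity $n$: a non-constant $v\in I^n$ splits, after a permutation allowed by symmetry, into the sub-vector $\vec a$ of entries equal to $\max v$ and a remaining sub-vector $\vec b$ of length $<n$; then $\M(\vec a)=\max v$ by reflexivity, $\M(\vec b)\le\max\vec b<\max v$ by the inductive hypothesis, and $\M(v)=\M(\vec b,\vec a)<\max v$ by \ref{QD2-4}, the lower estimate being symmetric. Condition \ref{QD1-4} follows by induction on $k$, the case $k=2$ being \ref{QD2-4}--\ref{QD2-5}: for $k\ge 3$ one writes $\M(\vec x_1,\dots,\vec x_k)=\M\big((\vec x_1,\dots,\vec x_{k-1}),\vec x_k\big)$, bounds $\M(\vec x_1,\dots,\vec x_{k-1})$ by the inductive hypothesis, and combines it with $\M(\vec x_k)$ through the case $k=2$; this amounts to a routine case distinction according to whether $\M(\vec x_1),\dots,\M(\vec x_{k-1})$ are all equal.

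It remains to derive the infinitesimality \ref{QD1-3} from \ref{QD2-1}--\ref{QD2-5}, and this is the heart of the matter. Fix $x<y$ and put $S_k:=\{\M(x^m,y^{k-m}):0\le m\le k\}$; using \ref{QD2-4} with a one-entry factor one checks that $m\mapsto\M(x^m,y^{k-m})$ is strictly decreasing, so $S_k$ is a chain from $y$ down to $x$ and \ref{QD1-3} asserts precisely that its mesh $g_k$ tends to $0$. Concatenating $j$ copies of $(x^m,y^{k-m})$ and applying \ref{QD2-5} gives $\M(x^{jm},y^{j(k-m)})=\M(x^m,y^{k-m})$, hence $S_k\subseteq S_{jk}$, and $\bigcup_k S_k=\bigcup_j S_{j!}$ is an increasing union which is dense in $(x,y)$ by \ref{QD2-3}. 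Since $S_{(j+1)!}=S_{(j+1)\cdot j!}$ places a new point inside every gap of $S_{j!}$, the meshes $g_{j!}$ strictly decrease; and were $g_{j!}\not\to 0$, a K\"onig/compactness argument (choose at each level a maximal gap together with its chain of ancestor gaps) would produce a nested sequence of gaps whose intersection is a non-degenerate interval disjoint from $\bigcup_j S_{j!}$, contradicting density. So $g_{j!}\to 0$. The delicate step, which I expect to be the principal obstacle, is to pass from this cofinal subsequence to \emph{all} $k$: the mesh is not monotone under $k\mapsto k+1$, so $g_{j!}\to 0$ alone does not give $g_k\to 0$. I expect the clean way around it is to use \ref{QD2-3} to reconstruct a quasideviation $E$ from $\M$ --- reading $E(x,t)/E(y,t)$ off the solutions $t$ of $\M(x^n,y^m)=t$ and extending by continuity --- whereupon \ref{QD1-3} follows because $\M(x^n,y^m)$ acquires a Bajraktarevi\'c-type form in a fixed continuous coordinate; Proposition~\ref{prop:QD1} then closes the equivalence.
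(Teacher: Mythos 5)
A preliminary remark on the comparison itself: the paper offers no proof of this statement --- both Propositions~\ref{prop:QD1} and \ref{prop:QD2} are imported as known characterizations from P\'ales \cite{Pal87d} --- so your attempt is being measured against a citation rather than an argument. Most of your reduction to Proposition~\ref{prop:QD1} does check out: the forward direction (including the mesh argument for \eq{QD2-3}), the induction on arity giving strictness \eq{QD1-1}, and the induction on $k$ giving \eq{QD1-4} are all fine.

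The genuine gap is exactly where you flag it: the derivation of infinitesimality \eq{QD1-3}. Your K\"onig/compactness argument does yield $g_{j!}\to 0$, but, as you concede, this does not control $g_k$ for arbitrary $k$, and the proposed repair --- ``reconstruct a quasideviation $E$ from $\M$ and read \eq{QD1-3} off a Bajraktarevi\'c-type form'' --- is not an argument: producing such an $E$ from the axioms is essentially the content of the whole proposition, so it cannot be presupposed at this step. As written the proof is therefore incomplete. The gap is closable, and more cheaply than via the factorial subsequence. By \eq{QD2-5} (which gives repetition invariance) the value $\M(\underbrace{x,\dots,x}_{a},\underbrace{y,\dots,y}_{b})$ depends only on the ratio $a/(a+b)$, and by the monotonicity you already established it defines a strictly decreasing map $\phi\colon\Q\cap[0,1]\to[x,y]$ with $\phi(0)=y$ and $\phi(1)=x$ whose range is dense in $[x,y]$ by \eq{QD2-3}. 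A monotone function with dense range has coinciding one-sided limits at every point, hence extends to a continuous --- and therefore uniformly continuous --- decreasing surjection of $[0,1]$ onto $[x,y]$. The mesh of $S_k=\phi(\{0,\tfrac1k,\dots,1\})$ is then bounded by the modulus of continuity of this extension at scale $1/k$, which gives \eq{QD1-3} for all $k$ simultaneously and renders the K\"onig step unnecessary. With that substitution your reduction to Proposition~\ref{prop:QD1} becomes a complete proof.
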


In fact a Bajraktarevi\'c mean have a nice characterization in terms of quasideviation means. Namely Bajraktarevi\'c mean are exactly these quasideviation means which satisfies so-called linking condition (see\ \cite{Pal87d}), that is for all $\vec{x},\vec{y},\vec{u},\vec{v} \in \bigcup_{n=1}^\infty I^n$ we have
\Eq{*}{
\M(\vec{x},\vec{u}) \le \M(\vec{x},\vec{v}) \wedge \M(\vec{y},\vec{u}) \le \M(\vec{y},\vec{v}) \Longrightarrow \M(\vec{x},\vec{u},\vec{y},\vec{u}) \le \M(\vec{x},\vec{v},\vec{y},\vec{v})\:. 
}
Let us notice that in view of Propositions~\ref{prop:QD1} and \ref{prop:QD2} one can characterize Bajraktarevi\'c mean in two ways using either five or six axioms. 
\subsection{Theory of languages}
Now we introduce so-called \emph{regular languages}. This section is to provide necessary background for our consideration, but we will not refer directly to results contained here. Therefore all notions introduced here are valid till the end of this section, as it is handy. This very elementary introduction is based on Boja\'nczyk \cite{Boj12} and Hopcroft-Motwani-Ullman \cite{HopMotUll06}.

Let $\Sigma$ be finite set called \emph{alphabet}. Let $\Sigma^+$ be a set of all nonempty strings having symbols in $\Sigma$. Let $\varepsilon$ be an empty word and $\Sigma^*:=\Sigma^+ \cup \{\varepsilon\}$. Every subset $L \subseteq \Sigma^*$ is called a \emph{language}.

\emph{Deterministic finite-state automata} consists of 
\begin{enumerate}
 \item A finite set of \emph{states} denoted by $Q$;
 \item A finite set of \emph{input symbols} denoted by $\Sigma$;
 \item A \emph{transition function} $\delta \colon Q \times \Sigma \to Q$;
 \item A \emph{start state} $q_0 \in Q$;
 \item A set of \emph{accepting states} $F\subset Q$.
\end{enumerate}
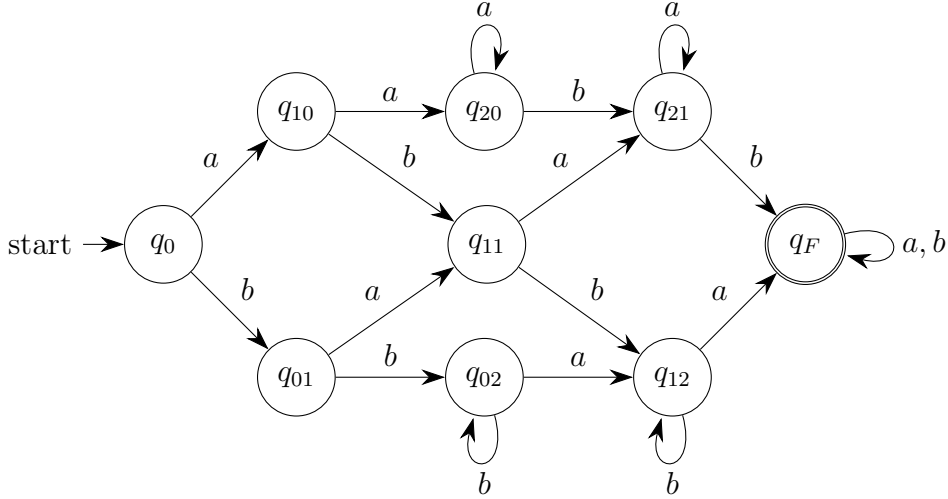
\begin{figure}

\begin{tikzpicture}[>={Stealth[width=6pt,length=9pt]},
node distance=25mm,auto]
\node[state,initial] (q_0) {$q_0$};
\node[state] (q_a) [above right of=q_0] {$q_{10}$};
\node[state] (q_b) [below right of=q_0] {$q_{01}$};
\node[state] (q_ab) at (4.3,0) {$q_{11}$};
\node[state] (q_a2) [right of= q_a] {$q_{20}$};
\node[state] (q_b2) [right of= q_b] {$q_{02}$};
\node[state] (q_a2b) [right of= q_a2] {$q_{21}$};
\node[state] (q_ab2) [right of= q_b2] {$q_{12}$};
\node[state,accepting](q_F) [below right of=q_a2b] {$q_{F}$};
\path[->] (q_0) edge node {$a$} (q_a);
\path[->] (q_0) edge node {$b$} (q_b);
\path[->] (q_b) edge node {$a$} (q_ab);
\path[->] (q_b) edge node {$b$} (q_b2);
\path[->] (q_a) edge node {$a$} (q_a2);
\path[->] (q_a) edge node {$b$} (q_ab);
\path[->] (q_a2) edge [loop above] node {$a$} (q_a2);
\path[->] (q_b2) edge [loop below] node {$b$} (q_a2);
\path[->] (q_ab) edge node {$a$} (q_a2b);
\path[->] (q_ab) edge node {$b$} (q_ab2);
\path[->] (q_a2) edge node {$b$} (q_a2b);
\path[->] (q_b2) edge node {$a$} (q_ab2);
\path[->] (q_a2b) edge [loop above] node {$a$} (q_a2b);
\path[->] (q_ab2) edge [loop below] node {$b$} (q_ab2);
\path[->] (q_a2b) edge node {$b$} (q_F);
\path[->] (q_ab2) edge node {$a$} (q_F);
\path[->] (q_F) edge [loop right] node {$a,b$} (q_F);
\end{tikzpicture}

  \caption[TBD]{\tabular[t]{@{}l@{}}\label{fig:lang}Example of deterministic finite-state automata. \\ Input symbols $\Sigma=\{a,b\}$, the only accepting state is $q_F$.\\ Corresponding language is ``at least two $a$-s and at least two $b$-s''.\endtabular}
\end{figure}

Processing of word $w=(w_1,w_2,\dots,w_n)\in \Sigma^*$ is based on iterative applying the transition function. More precisely we define a function $q \colon \Sigma^* \to Q$ by
\Eq{E:defq}{
q(\varepsilon)&:=q_0, \\
q(w_1)&:=\delta(q_0,w_1),\\
q(w_1\dots w_k)&:=\delta(q(w_1\dots w_{k-1}),w_k) &\text{ for }k >2.
}
Define $L(A):=q^{-1}(F)=\{w \in \Sigma^* \colon q(w)\in F\}$. A language is called \emph{regular} if it equals $L(A)$ for some deterministic finite-state automata $A=(Q,\Sigma,\delta,q_0,F)$.

Now we present two more approaches to regular languages. First, a language $L$ is regular if and only if there exists a finite monoid $(M,\cdot,1)$, a function $e \colon \Sigma \to M$, and a subset $F \subset M$ such that 
\Eq{*}{
(w_1,\dots,w_n)\in L \iff e(w_1)\cdots e(w_n) \in F
}
(empty word belongs to the language if an only if $1 \in F$).

Third definition is much more abstract. Define a relation $\sim$ on $\Sigma^*$ (so-called Myhill relation) by
\Eq{*}{
w \sim v :\iff \forall_{p,q \in \Sigma^*} \big(pwq \in L \iff pvq \in L \big)\:.
}

Obviously $\sim$ is an equivalence relation. Moreover it is known that $L$ is regular if and only if $\Sigma^*/_\sim$ is finite. This statement remains valid if we replace $\sim$ by one-sided Myhill relations, i.e.

\Eq{*}{
w \sim_- v :\iff \forall_{p \in \Sigma^*} \big(pw \in L \iff pv \in L \big)\:;\\
w \sim_+ v :\iff \forall_{q \in \Sigma^*} \big(wq \in L \iff vq \in L \big)\:.
}

Note that if $L$ is permutation-invariant (that is $(v_1,\dots,v_n)\in L$ if and only if $(v_{\sigma(1)}\dots,v_{\sigma
(n)})\in L$ for every permutation $\sigma \in S_n$) then all these relation are equal to each other. Example of an automata recognizing such a language is presented on Figure~\ref{fig:lang}. Note that every state of this automata refers to some element of the quotient set $\{a,b\}^* /_\sim$.

\subsection{Online evaluation}

In this section we intend to show the intuition beyond our idea. We keep the convention that all notations are internal within this section.  Following the idea of the previous section define a tuple consisting of 

\begin{enumerate}
 \item A set of \emph{states} denoted by $Q$;
 \item An interval $I$;
 \item A \emph{transition function} $\delta \colon Q \times I \to Q$;
 \item A \emph{start state} $q_0 \in Q$;
 \item An \emph{evaluation function} $F \colon Q \to I$.
\end{enumerate}

Processing of vector is based on iterative applying the transition function to obtain a 
function $q \colon \bigcup_{n=1}^\infty I^n \to Q$ defined by \eq{E:defq}. We calculate the final value using the evaluation function, i.e. the outcome of our calculations is $F\circ q \colon \bigcup_{n=1}^\infty I^n \to I$ (therefore its domain and set of values coincide with the one which is characteristic for a mean).

There appear a natural question: why is this consideration so important? Assume that we are given a sequence of elements in $I$. There are essentially two ways of input such sequence (to the computer). The first one is to allocate memory to store all sequence (\emph{offline input}). Main difficulty is that we have to know in advance (at least an upper bound to) a number 
of elements. (In practice we can also allocate memory during the input but it has no reasonable interpretation in ZFC theory.)

The second way of processing is so-called \emph{online input}. In this method we have some special terminating symbol (denoted here by $\#$) which appears at the end of input. The algorithm process a vector as follows:

\begin{algorithmic}[0] 
\Procedure{OnlineEvaluation}{$Q,I,\delta,q_0,F$}
\State $q\gets q_0$ \Comment{attach the initial state}
\While{True} \Comment{repeat forever}
\State \textbf{input} $a \in I \cup \{\#\}$
\If{$a = \#$} \Comment{terminating symbol was given}
\State \textbf{return} $F(q)$ \Comment{evaluate the function at the end of the sequence}
\Else
\State $q \gets \delta(q,a)$ \Comment{single transition}
\EndIf
\EndWhile
\EndProcedure
\end{algorithmic}

In this method data is a stream, i.e. after each element we can either add next one or evaluate the final value. Note that we need to keep in memory only a state $q \in Q$ (similarly like in a case of automata).

We will follow this idea (with simplified structure) to evaluate premeans. This is the reason why we refer to them as \emph{online premeans}.

\section{Online premeans}
Let $I \subset \R$ be an interval, $(Y,+)$ be a commutative semigroup, $F \colon  I \to Y$, and $G \colon \omega F(I) \to I$ such that $G (nF(x))=x$ for all $n \in \N$ and $x \in I$, 
where $\omega F(I)$ is a union of all Minkowski sums, i.e.
\Eq{*}{
\omega F(I):=\bigcup_{n=1}^\infty  nF(I)=\bigcup_{n=1}^\infty  \underbrace{F(I)+F(I)+\dots+F(I)}_{n\text{ times}}.
}
Define \emph{online premean} $\Lv_{F,G} \colon \bigcup_{n=1}^\infty I^n \to I$ by
\Eq{Lvdef}{
\Lv_{F,G}(a_1,\dots,a_n):= G \Big( F(a_1)+\cdots+F(a_n) \Big).
}
A pair $(F,G)$ is called a \emph{generating pair of $\Lv_{F,G}$}, $Y$ is called a \emph{freedom space}.
It is easy to verify that online premeans are reflexive and symmetric.

The intuition beyond this name is quite natural. Having a value $F(a_1)+\cdots+F(a_k) \in Y$ we can decide whether we would like to continue adding elements (i.e. add $F(a_{k+1})$ and so on) of to finish (that is to apply function $G$ to this value), exactly like in {\sc OnlineEvaluation} procedure. Let us also stress the analogy between this setting and the monoid approach to regular languages.

Before we begin dealing with this family let us present a simple example explaining our idea.
\begin{exa}[Gini means] \label{ex:Gini}
For $p,q \in \R$ define $F\colon \R_+ \to D$ and $G \colon D \to \R_+$ 
($D=\R_+^2$ for $p \ne q$ and $D=\R_+ \times \R$ for $p=q$) by
\Eq{*}{
F(x):=\begin{cases} (x^p,x^q) &\text{ for } p \ne q, \\
       (x^p\ln x,x^p) &\text{ for } p = q,
      \end{cases}
\qquad 
G(x,y):=\begin{cases} \big(\frac xy\big)^{\frac1{p-q}} &\text{ for } p \ne q, \\
       \exp\big(\frac xy\big) &\text{ for } p = q.
      \end{cases}
}
Then it is easy to verify that $\Lv_{F,G}=\G_{p,q}$ for all $p,q \in \R$. 
 \end{exa}

Now let us present few preliminary observations concerning online premeans.  

\begin{rem} \label{rem:1-1onto}
By $G\circ F = \id_I$ we obtain that $F$ is 1-1 and $G$ is onto.
\end{rem}

\begin{rem} Note that if $\Lv_{F,G}$ defined by \eq{Lvdef} is a premean on $I$ then we have $G(nF(x)) =x$ for all $x \in I$ and $n \in \N$. Therefore we do not have to verify this condition provided \eq{Lvdef} defines a premean on $I$.
\end{rem}

\begin{rem}[P\'ales \cite{Pal89b}] \label{rem:premean}
 Let $I$ be an interval $(Y,\oplus)$ be a free abelian semigroup generated by the elements of $I$. Then there is a natural 1-1 correspondence between symmetric functions $\M \colon \bigcup_{n=1}^\infty I^n \to \R$ and functions $m \colon Y \to I$ given by the formula
 \Eq{*}{
m(x_1 \oplus \cdots \oplus x_n):=\M(x_1,\dots,x_n) \qquad (n \in \N,\, x \in I^n).
 }
 Then $\Lv_{\id,m}=\M$, where $\id$ is a natural embedding $I \hookrightarrow Y$.
%
 In particular every symmetric mean is an online premean. In fact a converse implication is also valid as every online premean is symmetric.
\end{rem} 

\begin{rem}\label{rem:myhill} Similarly like in a case of languages, for a given symmetric mean $\M$ we can define a Myhill-type relation $\sim$ on $\bigcup_{n=1}^\infty I^n$ by
\Eq{*}{
v \sim w :\iff \Big( \M(v)=\M(w) \text{ and } \M(v,q)=\M(w,q) \text{ for all }q \in \bigcup_{n=1}^\infty I^n \Big).
}
Obviously $\sim$ is an equivalence relation. Moreover, $Y:=\big(\bigcup_{n=1}^\infty I^n\big) /_{\sim}$ is the minimal freedom space for a mean $\M$.
\end{rem}

Due to Remark~\ref{rem:myhill} freedom space describe amount of information which should be preserved from a sequence in order to evaluate a mean and/or add elements. The aim now is to minimalize freedom space for a given premean in a constructive way (abstract construction has been described in a remark above). 

If freedom space has some topology then the following simple proposition is very useful:
\begin{prop}\label{prop:contem}
Let $Y$ a be topological semigroup, $Y'$ be its subspace (which is also a semigroup), and $\imath \colon Y' \hookrightarrow Y$ be the inclusion map. Moreover let $F \colon I \to Y'$ and $G \colon \omega F(Y') \to I$ be two continuous functions. 

Then
both $F^*:=\imath \circ F \colon I \to Y$ and $G^*:=G \circ \imath^{-1}|_{\omega (i\circ F(I))} \colon \omega (i\circ F(I)) \to I$ are continuous. Moreover $\Lv_{F,G}=\Lv_{F^*,G^*}$.
\end{prop}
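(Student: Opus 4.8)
The plan is to exploit the two elementary facts that a subspace inclusion is simultaneously a topological embedding and a semigroup homomorphism; once these are isolated, every assertion becomes essentially a one-line verification.

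First I would deal with $F^*$. Since $Y'$ carries the subspace topology inherited from $Y$, the inclusion $\imath\colon Y'\hookrightarrow Y$ is continuous, hence $F^*=\imath\circ F$ is a composition of continuous maps and therefore continuous. Next I would identify the domain of $G^*$ and check it is contained in $Y'$. Because $Y'$ is a sub-semigroup of $Y$, the map $\imath$ is a semigroup homomorphism, so $n\,(\imath\circ F)(I)=\imath\bigl(nF(I)\bigr)$ for every $n\in\N$; taking the union over $n$ yields
\[
\omega\bigl(\imath\circ F(I)\bigr)=\imath\bigl(\omega F(I)\bigr)\subseteq\imath(Y')=Y'.
\]
Thus $\imath^{-1}$ really is defined on all of $\omega(\imath\circ F(I))$, and its image there is precisely $\omega F(I)=\dom G$. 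Moreover $\imath$, regarded as a map onto its image $\imath(Y')=Y'$ with the subspace topology, is by definition a homeomorphism, so $\imath^{-1}$ is continuous on $\imath(Y')$ and hence so is its restriction to the subset $\omega(\imath\circ F(I))$. Consequently $G^*=G\circ\imath^{-1}|_{\omega(\imath\circ F(I))}$ is again a composition of continuous maps, hence continuous. I would also record that $G^*$ satisfies the evaluation identity: $G^*(nF^*(x))=G\bigl(\imath^{-1}(\imath(nF(x)))\bigr)=G(nF(x))=x$, so $\Lv_{F^*,G^*}$ is a bona fide online premean.

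Finally, the equality $\Lv_{F,G}=\Lv_{F^*,G^*}$ is a pointwise computation: for $a=(a_1,\dots,a_n)\in I^n$, using once more that $\imath$ is a homomorphism,
\[
\Lv_{F^*,G^*}(a)=G\Bigl(\imath^{-1}\bigl(\imath(F(a_1))+\cdots+\imath(F(a_n))\bigr)\Bigr)=G\bigl(\imath^{-1}(\imath(F(a_1)+\cdots+F(a_n)))\bigr)=G\bigl(F(a_1)+\cdots+F(a_n)\bigr)=\Lv_{F,G}(a).
\]
There is no genuine obstacle; the single point that deserves a moment's care is the well-definedness of $G^*$, i.e. the inclusion $\omega(\imath\circ F(I))\subseteq Y'$, which is exactly where the hypothesis that $Y'$ is a sub-semigroup (and not merely a topological subspace) enters.
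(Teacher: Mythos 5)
Your proof is correct. The paper states this proposition without any proof, treating it as immediate, and your verification --- inclusion is continuous for the subspace topology, $\imath$ is a semigroup homomorphism because $Y'$ is a sub-semigroup so that $\omega(\imath\circ F(I))=\imath(\omega F(I))\subseteq Y'$, and the pointwise identity $\Lv_{F^*,G^*}=\Lv_{F,G}$ --- is exactly the routine argument the authors evidently had in mind (you also correctly read the domain of $G$ as $\omega F(I)$ despite the statement's typo $\omega F(Y')$).
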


The intuition beyond this proposition is very natural. Namely, if a freedom space is larger (one is embedable into another) then we it can preserve more information -- finite anolgue is a number of equivalnce classes of Myhill relation, however it this setup it is always continuum.

Consequently we may compare complexity of means based of their freedom spaces (or minimal freedom spaces). Indeed, in view of Proposition~\ref{prop:contem} one can say that a mean $\M_1$ is \emph{computationally simpler} that a mean $\M_2$ if a freedom space of $\M_1$ is continuously embeddable into a freedom space of $\M_2$. However, as it was mentioned, freedom space is difficult to calculate a'priori, furthermore there are no natural topology for a given freedom space, finally it this condition would be very difficult to satisfy or even verify.

Therefore instead of taking minimal freedom spaces we will consider a sort of reference order. Namely we assume that either $Y=\R^k$ or $Y=\R^k \times \Z$ ($k \in \N$) and assume that both $F$ and $G$ are continuous.

\subsection{Types of premeans}

A mean $\M$ is of type $\TYPE k$ ($k\in \N$) if $\M=\Lv_{F,G}$ for some continuous functions $F \colon I \to (\R^k,+)$ and $G \colon \omega F(I) \to I$.

Analogously, if it is true with $F \colon I \to (\R^k \times \Z,+)$ then we say that $\M$ is of type $\TYPEP k$ ($k \in \N$). In this case as $F$ is continuous, it is constant on the last entry. Thus we can assume without loss of generality that it equals one on this coordinate (it is also the reason why there is no point to put more than one integer entry). Base of this we refer to this element as a \emph{counter}.

In fact we slightly abuse these notions and denote by $\TYPE{k}$ and $\TYPEP{k}$ classes of all means of this type (defined on any interval).

As for all $k \in \N$ there exist continuous embeddings $(\R^k,+) \hookrightarrow (\R^k \times \Z,+) \hookrightarrow (\R^{k+1},+)$, in view of Proposition~\ref{prop:contem} we obtain a series of inclusions 
\Eq{E:Hier}{
\TYPE{1} \subseteq \TYPEP{1} \subseteq \TYPE{2} \subseteq \TYPEP{2} \subseteq \dots
}

%

The remaining part of paper goes twofold. First, we characterize means of types $\TYPE{1}$, $\TYPEP{1}$, and repetition invariant $\TYPE{2}$ means without negligable elements -- we obtain empty set, quasi-arithmetic means, and Bajraktarevi\'c means, respectively. 
Later, applying generalized symmetric polynomials, we show some examples of means of higher type. 

To conclude this section let us stress that there are means which are not of any of these types. For example median (lower- or upper-) is one of them. For instance two vectors are in (Myhill-type) relation for a median if an only if one is a permutation of another (proof of this statement is straightforward and therefore omitted). 

\section{Means of low types}
\subsection{Means of type $\TYPE{1}$ and $\TYPEP{1}$}
We begin our consideration with two first classes in a hierarchy mentioned in \eq{E:Hier}. These are the only two classes which reduces to well-known families (empty set and quasi-arithmetic means, respectively). These result are proved in subsequent propositions.

\begin{prop}\label{prop:T1}
 There are no means of type $\TYPE{1}$.
\end{prop}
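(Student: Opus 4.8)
The statement is that no mean can be written as $\Lv_{F,G}$ with $F\colon I\to(\R,+)$ and $G\colon\omega F(I)\to I$ both continuous. My plan is to derive a contradiction from the structure that continuity of $F$ forces on a one-dimensional freedom space.

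First I would observe that since $F\colon I\to\R$ is continuous and injective (Remark~\ref{rem:1-1onto}), it is strictly monotone; without loss of generality assume $F$ is strictly increasing, so $F(I)$ is an interval (possibly degenerate, but if it were degenerate $F$ would be constant, contradicting injectivity on a nondegenerate $I$). Write $F(I)=(\alpha,\beta)$ with $-\infty\le\alpha<\beta\le+\infty$, modulo endpoint inclusion. The key point is to compare the two-fold sumset $2F(I)=F(I)+F(I)=(2\alpha,2\beta)$ with $F(I)$ itself, using the defining identity $G(nF(x))=x$. Pick any $x\in I$; then $nF(x)\in nF(I)$ and $G(nF(x))=x$. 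But also, for suitable $y_1,\dots,y_n$, many points of $nF(I)$ arise as $F(y_1)+\cdots+F(y_n)$, and $G$ of such a sum is the value of the mean at $(y_1,\dots,y_n)$, which by the mean/reflexivity property lies in $I$ and more precisely between $\min y_i$ and $\max y_i$.

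The heart of the argument: I would choose a single point $t_0\in 2F(I)$ that is simultaneously expressible as $F(x_0)+F(x_0)=2F(x_0)$ for one value $x_0$, and as $F(y_1)+F(y_2)$ for $y_1\ne y_2$. This is possible because $F(I)$ is an interval: for $t_0=2F(x_0)$ with $x_0$ interior to $I$, the equation $s+s'=t_0$ with $s,s'\in F(I)$ has a whole one-parameter family of solutions, namely $s=F(x_0)+\varepsilon$, $s'=F(x_0)-\varepsilon$ for small $\varepsilon>0$, giving $y_1=F^{-1}(F(x_0)+\varepsilon)>x_0>F^{-1}(F(x_0)-\varepsilon)=y_2$. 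Then on one hand $G(t_0)=G(2F(x_0))=x_0$ by the defining identity, and on the other hand $G(t_0)=\Lv_{F,G}(y_1,y_2)$, which by the mean property satisfies $y_2<\Lv_{F,G}(y_1,y_2)<y_1$ — wait, reflexivity alone only gives $\le$; but $y_2<x_0<y_1$ strictly, so $G(t_0)=x_0$ is consistent with $y_2\le G(t_0)\le y_1$. So I need the genuinely stronger fact: vary $\varepsilon$. For $\varepsilon\in(0,\delta)$ we get $\Lv_{F,G}\big(F^{-1}(F(x_0)+\varepsilon),F^{-1}(F(x_0)-\varepsilon)\big)=x_0$ for \emph{every} such $\varepsilon$; that in itself is fine. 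The real contradiction comes from perturbing $t_0$: consider $t=2F(x_0)+\eta$ for small $\eta$. Then $G(t)=G\big(2F(x_0+\eta')\big)=x_0+\eta'$ where $\eta'$ is chosen so $2F(x_0+\eta')=t$, i.e. $G$ restricted to the "diagonal" $\{2F(x):x\in I\}$ is forced to be $x\mapsto x$ composed with the inverse of $x\mapsto 2F(x)$. But $t$ is also $F(u)+F(v)$ for any $u,v$ with $F(u)+F(v)=t$, and we can pick $u,v$ both \emph{less than} $x_0+\eta'$ (push both summands down, compensating: take $F(u)=F(x_0+\eta')-c$, $F(v)=F(x_0+\eta')+c$ — no, that keeps the average). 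The clean way: fix $t_0=2F(x_0)$ and compare $G(t_0)=x_0$ against the representation $t_0=F(y_1)+F(y_2)$ with, say, $y_1$ close to the right end and $y_2$ small, forcing $G(t_0)$ to lie in $(y_2,y_1)$ — consistent. So no single evaluation contradicts; I must use that the \emph{same} semigroup element $t_0$ is reached from arbitrarily many vectors and that $G$ is single-valued, which is already built in and consistent.

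Given the above, the honest approach is: suppose $\Lv_{F,G}$ is a mean. Take $a<b$ in $I$ and consider $c:=\Lv_{F,G}(a,b)\in(a,b)$ (strict, or at least in $[a,b]$). Then $F(a)+F(b)=2F(c)$ if and only if $G(F(a)+F(b))=c$ AND $G$ is consistent on the diagonal — but $G(F(a)+F(b))=c$ is the definition, and $G(2F(c))=c$ is the identity, so we conclude $F(a)+F(b)=2F(c)$ whenever $G$ is injective. $G$ need not be injective, but here is the point: $F(a)+F(b)$ and $2F(c)$ both map under $G$ to $c$; if they were equal for all $a,b$ we'd get $F$ is affine (Jensen's equation with continuity), so $\Lv_{F,G}$ is arithmetic-like, i.e. $\QA{f}$ with $f$ linear, which is the ordinary arithmetic mean — and one checks the arithmetic mean is genuinely of type $\TYPEP{1}$ (needs the counter!) but NOT of type $\TYPE{1}$, because recovering the average from the bare sum $F(a_1)+\cdots+F(a_n)$ requires knowing $n$. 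That is the crux. So the final step I would write carefully: if $(F,G)$ is continuous with $Y=\R$, then from $G(nF(x))=x$ for all $n$, pick $x\ne y$ in $I$; the points $nF(x)$ and $mF(y)$ over all $n,m$ must avoid collisions except when forced, but by density/continuity (choosing $n,m$ and then nearby arguments) one produces $n,m$ and $x',y'$ with $nF(x')=mF(y')$ yet $G$ of that common value must equal both $x'$ and $y'$ — contradiction once $x'\ne y'$. Concretely: since $F(I)$ is a nondegenerate interval, for large $n$ the set $nF(I)$ and $(n+1)F(I)$ overlap in a nondegenerate interval; a point $w$ in the overlap equals $F(x_1)+\cdots+F(x_n)=F(x_1')+\cdots+F(x_{n+1}')$, so $G(w)=\Lv_{F,G}(x_1,\dots,x_n)=\Lv_{F,G}(x_1',\dots,x_{n+1}')$; choosing the first tuple constant $=v$ gives $G(w)=v$, and choosing the second tuple constant would need $w\in(n+1)F(I)$ to be $(n+1)F(v')$ with $v'=v$, i.e. $nF(v)=(n+1)F(v)$, forcing $F(v)=0$; since this must hold for every $v\in I$ we get $F\equiv 0$, contradicting injectivity.

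The main obstacle is organizing this last step so it is clean: the simplest rigorous version is to show $0\in F(I)$ is forced and then that $F$ vanishes identically. I would phrase it as: pick any $v\in I$ and $n\in\N$; the element $w:=nF(v)$ lies in $nF(I)$, and if $n\ge 2$ also, by continuity of $F$ and connectedness of $F(I)$, $w=F(v)+(n-1)F(v)$ can be rewritten as $F(v_1)+\cdots+F(v_n)$ with not all $v_i$ equal, so that $G(w)=\Lv_{F,G}(v_1,\dots,v_n)$ lies strictly between $\min v_i$ and $\max v_i$ unless the $v_i$ are all equal — but $G(w)=v$, so this gives $\min v_i\le v\le\max v_i$, no contradiction yet; the contradiction requires comparing representations of \emph{different lengths}, which is exactly where $\R$ (no counter) fails and $\R\times\Z$ (with counter) succeeds. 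So I would explicitly exhibit, using that $\bigcup_n nF(I)$ has consecutive pieces overlapping, one value $w$ with $w=nF(v)$ and $w=(n+1)F(v')$ for the \emph{same} $w$; then $v=G(w)=v'$, whence $nF(v)=(n+1)F(v)$, so $F(v)=0$; and since $v$ was arbitrary in an interval, $F\equiv 0$ on $I$, contradicting that $F$ is injective on the nondegenerate interval $I$. The only delicate point to verify is that such a $w$ exists, i.e. that $nF(I)\cap(n+1)F(I)\ne\emptyset$ for some $n$ and that the relevant preimages are interior — this follows by taking $n$ large if $0\notin\overline{F(I)}$ is \emph{false}, and is immediate if $0\in F(I)$; a short case analysis on the sign/location of $F(I)$ relative to $0$ finishes it.
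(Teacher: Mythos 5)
Your final argument --- producing a nonzero $w$ with $w=nF(v)=(n+1)F(v')$, so that $G(w)$ forces $v=v'$, hence $F(v)=0$ and $w=0$, contradicting the choice of $w$ (equivalently, injectivity of the continuous $F$) --- is correct, and it is essentially the paper's proof: the paper creates the same collision of constant tuples by picking $x_1\ne x_2$ with $F(x_1)/F(x_2)=p/q$ a positive rational, whence $x_1=G(qF(x_1))=G(pF(x_2))=x_2$, and your overlap of consecutive dilates is just the special case $p/q=(n+1)/n$ together with the same case analysis on the position of $F(I)$ relative to $0$. The preceding paragraphs of abandoned attempts should simply be cut; only the last step is the proof.
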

\begin{proof}
 Assume that $\M$ is a mean of type $\TYPE{1}$. Then there exists continuous functions $F \colon I \to \R$ and $G \colon \omega F(I) \to I$ such that $\M$ is of the form \eq{Lvdef}. As $F$ is continuous we obtain that it is strictly monotone and therefore invertible. 
 Thus there exist $x_1,x_2 \in I$ with $x_1\ne x_2$ such that $F(x_1)$ and $F(x_2)$ are both nonzero, have the same sign, and $\tfrac{F(x_1)}{F(x_2)}$ is a rational number, i.e. $\tfrac{F(x_1)}{F(x_2)}=\tfrac pq$ for some $p,q \in \N_+$.
Then $qF(x_1)=pF(x_2)$ and, consequently,
\Eq{*}{
x_1
=\Lv_{F,G}(\underbrace{x_1,\dots,x_1}_{q \text{ times}})
=G(qF(x_1))
=G(pF(x_2))
=\Lv_{F,G}(\underbrace{x_2,\dots,x_2}_{p \text{ times}})
=x_2
}
contradicting the assumption.
\end{proof}

\begin{prop}\label{prop:T1+}
Means of type $\TYPEP{1}$ are exactly quasi-arithmetic means.
\end{prop}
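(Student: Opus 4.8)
The plan is to show both inclusions. For the easy direction, a quasi-arithmetic mean $\QA{f}$ with $f\colon I\to\R$ continuous and strictly monotone is of type $\TYPEP 1$: take $F\colon I\to\R\times\Z$ by $F(x):=(f(x),1)$ and $G\colon \omega F(I)\to I$ by $G(s,n):=f^{-1}(s/n)$. Continuity of $f$ and $f^{-1}$ gives continuity of $F$ and $G$, and $G(nF(x))=G(nf(x),n)=f^{-1}(f(x))=x$, while $G(F(a_1)+\dots+F(a_n))=f^{-1}\big(\tfrac{f(a_1)+\dots+f(a_n)}{n}\big)=\QA{f}(a_1,\dots,a_n)$, so $\QA{f}=\Lv_{F,G}$.

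For the converse, suppose $\M=\Lv_{F,G}$ with $F\colon I\to\R\times\Z$ and $G\colon\omega F(I)\to I$ continuous. As noted in the paper, continuity of $F$ forces the $\Z$-coordinate to be constant, and we may normalize it to $1$, so $F(x)=(f(x),1)$ for some continuous $f\colon I\to\R$; then $\omega F(I)=\bigcup_{n\ge1}(nf(I))\times\{n\}$ and $\M(a_1,\dots,a_n)=G\big(f(a_1)+\dots+f(a_n),\,n\big)$. The first step is to prove $f$ is strictly monotone. Since $f$ is continuous, if it were not strictly monotone there would be $x_1\ne x_2$ with $f(x_1)=f(x_2)$; then $F(x_1)=F(x_2)$, and since $F$ is $1$-$1$ by Remark~\ref{rem:1-1onto} this is a contradiction. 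Hence $f$ is injective and continuous, thus strictly monotone, and $f^{-1}\colon f(I)\to I$ is continuous.

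The second step is to recover the quasi-arithmetic formula. From $G(nf(x),n)=x$ for all $x\in I$, $n\in\N$, we get that on the "diagonal" slice $G(ns,n)=f^{-1}(s)$ for every $s\in f(I)$. The goal is to show $G(t,n)=f^{-1}(t/n)$ for all $(t,n)\in\omega F(I)$, i.e. for all $n$ and all $t\in nf(I)$. Here $nf(I)$ is an interval (a sum of $n$ copies of the interval $f(I)$), and it contains $\{ns:s\in f(I)\}$, which is a subinterval; but we need $t/n\in f(I)$, which is exactly the statement that $nf(I)=n\cdot f(I)$ as sets. Since $f(I)$ is an interval this is immediate: $nf(I)=\{s_1+\dots+s_n:s_i\in f(I)\}=\{ns:s\in f(I)\}=n\cdot f(I)$ because the sum of $n$ elements of an interval ranges exactly over $n$ times that interval. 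Therefore for any admissible $(t,n)$, writing $s:=t/n\in f(I)$, we have $t=ns$ and $G(t,n)=G(ns,n)=f^{-1}(s)=f^{-1}(t/n)$. Consequently
\Eq{*}{
\M(a_1,\dots,a_n)=G\big(f(a_1)+\dots+f(a_n),n\big)=f^{-1}\!\left(\frac{f(a_1)+\dots+f(a_n)}{n}\right)=\QA{f}(a_1,\dots,a_n),
}
so $\M$ is quasi-arithmetic.

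The main obstacle is the normalization of the integer coordinate and the bookkeeping that $nf(I)$ coincides with $n\cdot f(I)$: one must be careful that $G$ is a priori only defined on $\omega F(I)$, not on all of $\R\times\Z$, so the identity $G(t,n)=f^{-1}(t/n)$ can only be asserted where it makes sense, and one has to check that the points arising from actual vectors $(a_1,\dots,a_n)$ all have $t/n$ landing inside $f(I)$. The convexity (interval) structure of $f(I)$ is what makes this work, and it is worth stating explicitly. Everything else — continuity transfer via $f^{-1}$, the equivalence of the two formulas — is routine.
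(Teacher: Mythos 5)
Your proof is correct and follows essentially the same route as the paper: the same generating pair $(F,G)=((f,\cdot\,,1),\,f^{-1}(\cdot/\cdot))$ for the easy inclusion, and for the converse the same steps of normalizing the counter, deducing strict monotonicity of $f$ from Remark~\ref{rem:1-1onto}, reading $G(ns,n)=f^{-1}(s)$ off reflexivity, and identifying the Minkowski sum $nf(I)$ with the dilation $n\cdot f(I)$. Your explicit justification of that last identification (via the interval structure of $f(I)$) is the one point the paper passes over more quickly, but it is the same argument.
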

\begin{proof}
 To verify that every quasi-arithmetic mean is of type $\TYPEP{1}$ take any interval $I$ and continuous, strictly monotone function $f \colon I \to \R$. Define $F(x)=(f(x),1)$ and $G(y,n):=f^{-1}(y/n)$. For $n \in \N$ and $(v_1,\dots,v_n)\in I^n$ we get
\Eq{*}{
\Lv_{F,G}(v_1,\dots,v_n)
&=G\Big(\sum_{i=1}^n F(v_i)\Big)
=G\Big(\sum_{i=1}^n (f(v_i),1)\Big)
=G\Big(\sum_{i=1}^nf(v_i),n\Big)\\
&=f^{-1}\Big(\tfrac1n\sum_{i=1}^nf(v_i)\Big)
=\QA{f}(v_1,\dots,v_n).
}
Thus $\Lv_{F,G}=\QA{f}$ which implies that $\QA{f}$ is of type $\TYPEP{1}$.
  
  We are now going to prove that every mean of type $\TYPEP{1}$ is a quasi-arithmetic mean. Take a single variable, continuous function $f \colon I \to R$ such that $F(x)=(f(x),1)$. In view of Remark~
  \ref{rem:1-1onto} we obtain that $f$ is 1-1 as it is also continuous we get that it is strictly monotone. 
    
In view of reflexivity of $\Lv_{F,G}$ we have 
\Eq{*}{
x
=\Lv_{F,G}(x,\dots,x)=G(nF(x))=G(nf(x),n) \qquad (x \in I,\, n \in \N).
}
In particular, upon putting $x:=f^{-1}(y/n)$ for $n \in \N$ and $y \in \{ny \colon y \in f(I)\}$, we obtain
\Eq{*}{
G(y,n)=f^{-1}(y/n) \qquad (y \in \{ny \colon y \in f(I)\},\, n \in \N).
}

As $f$ is continuous and strictly monotone we have $\{ny \colon y \in f(I)\}=n f(I)$. 
Repeating the same argumentation as in the previous implication we obtain $\Lv_{F,G}=\QA{f}$.
\end{proof}

\subsection{Means of type $\TYPE{2}$}
This section consists of a single statement which characterize Bajraktarevi\'c means. Next in two examples we show that its assumptions cannot be omitted. 

In fact this statement was our motivation to write this paper. It turns out that we can characterize Bajraktarevi\'c mean using their complexity and very natural axioms. Its long and technical proof is shifted to the last section.

\begin{thm}\label{thm:RepInvT2}
Repetition invariant means of type $\TYPE{2}$ without negligible element are exactly Bajraktarevi\'c means.
\end{thm}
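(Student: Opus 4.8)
The plan is to prove the two inclusions separately. The easy direction is that every Bajraktarević mean $\B_{f,g}$ is a repetition invariant mean of type $\TYPE{2}$ without negligible element; the hard direction is the converse. For the easy direction I would mimic Example~\ref{ex:Gini}: given continuous $f,g\colon I\to\R$ with $g>0$ and $f/g$ strictly monotone, set $F(x):=(f(x),g(x))\in\R^2$ and $G(u,v):=(f/g)^{-1}(u/v)$ on $\omega F(I)$. Since $g>0$, every element of $\omega F(I)$ has strictly positive second coordinate, so $G$ is well defined and continuous, and a direct computation gives $\Lv_{F,G}=\B_{f,g}$. That Bajraktarević means are repetition invariant and have no negligible element was already recorded in the excerpt, so this half is essentially bookkeeping.

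For the hard direction, suppose $\M=\Lv_{F,G}$ is repetition invariant, of type $\TYPE{2}$, and has no negligible element, with $F=(F_1,F_2)\colon I\to\R^2$ and $G\colon\omega F(I)\to I$ continuous and $G(nF(x))=x$. The goal is to recover a generating pair $(f,g)$ of Bajraktarević type. The first structural step is to understand the set $F(I)$: since $F$ is continuous and (by Remark~\ref{rem:1-1onto}) injective, $F(I)$ is a simple arc in $\R^2$; I would then use repetition invariance to show that $F(I)$ cannot be ``radially degenerate'' in a way that would collapse the mean. Concretely, repetition invariance says $\M(x,\dots,x,y,\dots,y)$ (with $m$ copies of each) is independent of $m$, i.e. $G(m(F(x)+F(y)))=G(F(x)+F(y))$ for all $m$; combined with $G(nF(z))=z$ this forces a scaling relation. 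The natural move is to pass to the cone generated by $F(I)$ and quotient by positive scalars: I expect to show that the ray through $F(x)+F(y)$ determines the value $\M(x,y)$, and more generally that $G$ factors through projectivization of $\omega F(I)$. After a possible linear change of coordinates on $\R^2$ (legitimate by Proposition~\ref{prop:contem}, since linear isomorphisms of $\R^2$ are continuous semigroup embeddings), one arranges that $F(I)$ lies in an open half-plane $\{v>0\}$ (the absence of a negligible element is what rules out $0\in\overline{F(I)}$ creating a neutral-like element, and rules out the half-plane boundary being attained); then writing $F=(f,g)$ with $g>0$ and $t:=f/g$, the projective coordinate is exactly the ratio $f/g$, so $G(u,v)$ depends only on $u/v$, say $G(u,v)=\varphi(u/v)$, and reflexivity forces $\varphi=(f/g)^{-1}$. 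Unwinding definitions yields $\M=\B_{f,g}$, and strict monotonicity of $f/g$ comes from injectivity of $F$ together with $g>0$.

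The main obstacle is the middle step: proving that $G$ genuinely factors through the ray/ratio, i.e. that two points of $\omega F(I)$ on the same ray through the origin have the same $G$-value, and that the half-plane normalization is achievable. Repetition invariance only directly gives the ray-invariance for points of the special form $m(F(x)+F(y))$ or $m(F(x_1)+\cdots+F(x_n))$ — that is, for integer multiples of already-achievable sums — so one must bootstrap from these via density: the sums $F(x_1)+\cdots+F(x_n)$ form a reasonably rich subset of the cone (here the finer structure of $\TYPE{2}$, as opposed to higher types, is essential — a planar arc sums to something one can analyze), and continuity of $G$ propagates the ray-invariance to the closure. Handling the integer counter is what the companion $\TYPEP{2}$ discussion avoids; in $\TYPE{2}$ one must show the counter is, in effect, already encoded continuously, which is precisely where repetition invariance does the work and where the ``no negligible element'' hypothesis prevents a degenerate quasi-arithmetic-like collapse. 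I would expect the write-up of this step to require a careful case analysis on the geometry of the arc $F(I)$ (whether it is bounded, whether its closure meets the origin or the coordinate axes), which is presumably why the paper defers the full argument to its final section.
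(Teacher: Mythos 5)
Your overall architecture (easy direction by exhibiting the generating pair; hard direction by normalizing to $g>0$ and showing $G$ factors through the ratio $u/v$) matches the paper's in spirit, and the easy direction is fine. But the hard direction has two concrete gaps beyond the step you flag as "the main obstacle". First, your claim that ``strict monotonicity of $f/g$ comes from injectivity of $F$ together with $g>0$'' is false: $F(x)=(x,x^2+1)$ is injective with positive second coordinate, yet $x/(x^2+1)$ is not injective on $\R$. Injectivity of the ratio is a substantive fact that must be extracted from the mean property itself; the paper proves it in Lemma~\ref{lem:pom2} by assuming $f/g$ takes the same value $\alpha$ at two points $p<q$, locating a point $x_0$ where the ratio is extremal on $(p,q)$, and showing (via a lemma guaranteeing that large multiples of interior directions of the cone over $F((r,q))$ are reachable as genuine sums $F(v_1)+\cdots+F(v_n)$) that some $k F(x_0)$ is such a sum with all $v_i$ on the wrong side of $x_0$ --- contradicting the mean property. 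Without this, $(f/g)^{-1}$ and hence $\B_{f,g}$ need not even be defined. Second, the half-plane normalization is not a consequence of ``no negligible element''. That hypothesis only yields $F(x)\neq(0,0)$ (a zero of $F$ would be negligible), and a connected arc in $\R^2\setminus\{0\}$ can still wind around the origin, in which case no linear functional is nonvanishing on $F(I)$. The paper's Lemma~\ref{lem:pom1} rules this out by showing that then $\omega F(I)=\R^2$ (using the Banakh--Jab\l{}o\'nska--Jab\l{}o\'nski theorem that $F(I)+F(I)$ has nonempty interior), whereupon repetition invariance gives $G(\Q_+\cdot y)=G(y)$ everywhere and continuity at the origin forces $G$ to be constant --- contradicting surjectivity of $G$.

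On the central step, your ``bootstrap ray-invariance by density and continuity'' is the right instinct but is exactly where the work lives, and it splits into cases you do not address. When $F(I)$ is locally contained in an affine line (so $g=C+Df$ near the relevant point), rays meet $\omega F(I)$ only in rational multiples and the mean is locally quasi-arithmetic; the paper handles this via the $\TYPEP{1}$ characterization and then propagates the identity $G(p,q)=(f/g)^{-1}(p/q)$ along rays using a difference-equation argument. When $f$ and $g$ are locally independent, the paper does not prove global ray-invariance at all; instead, for each fixed input $a$ with $m:=\B_{f,g}(a)$ it changes coordinates to $F_m=(f-\theta g,\,g)$ so that $\sum F_m(a_i)=(0,\sum g(a_i))$, constructs pairs $v_r<m<v_r'$ with $f_m(v_r)=-f_m(v_r')$ to squeeze $G_m(0,z)$ into $(v_r,v_r')$ for all large $z$, concludes $\lim_{z\to\infty}G_m(0,z)=m$, and only then invokes repetition invariance on the single ray $\{(0,z)\}$. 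You would need to supply arguments of comparable strength for both cases before the proposal constitutes a proof.
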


In fact this theorem is somehow related with Propositions \ref{prop:QD1} and \ref{prop:QD2} as in view of this theorem the following statements are easy to verify
\begin{prop} The following statements remains valid:
\begin{enumerate}[A.]
\item Each of conditions:  \eq{QD1-1}, \eq{QD1-3}, \eq{QD1-4} in Proposition~\ref{prop:QD1}, and
\eq{QD2-3}, \eq{QD2-4} in Proposition~\ref{prop:QD2} 
implies that a mean has no negligable element.
\item Each of conditions: 
\eq{QD1-4} in Proposition~\ref{prop:QD1}, and
\eq{QD2-5} in Proposition~\ref{prop:QD2} implies that a mean is repetition invariant.
\end{enumerate}
\end{prop}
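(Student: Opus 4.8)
The statement is a pair of ``soft'' implications that should follow directly from the definitions of negligible element and repetition invariance, once one isolates the right two-vector comparison inside each of the five listed axioms. The plan is to treat parts A and B separately, and within each to extract from every named condition a short argument whose only inputs are reflexivity, symmetry, and the condition itself. I would not invoke Theorem~\ref{thm:RepInvT2} at all here, despite the sentence preceding the proposition; the implications are elementary consequences of the axioms and are cleanest proved by hand.

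\textbf{Part A (no negligible element).}
Suppose toward a contradiction that $e \in I$ were a negligible element. The uniform strategy is to compare a ``padded'' vector with its unpadded version and derive a strict inequality that negligibility forbids. For \eq{QD1-1} (strictness), fix any $x \ne e$ and consider the two-entry vector $(x,e)$: negligibility gives $\M(x,e)=\M(x)=x$, yet strictness applied to the non-constant vector $(x,e)$ forces $\min(x,e)<\M(x,e)<\max(x,e)$, so $\M(x,e)\neq x$, a contradiction. For \eq{QD2-3}, I would choose $u,v,x,y$ straddling $e$ and use that every mean built by appending copies of $e$ collapses to the mean of the non-$e$ entries, which prevents the required value from landing strictly between $u$ and $v$ for arbitrarily fine windows; the infinitesimal-type condition \eq{QD1-3} is handled the same way, since padding with copies of $e$ leaves the value unchanged and so the relevant increments cannot be forced to vanish in the prescribed manner. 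For the super/sub-mean conditions \eq{QD1-4} and \eq{QD2-4}, I would take $\vec{x_1}$ a vector with $\M(\vec{x_1})\neq e$ and $\vec{x_2}=(e)$ (so $\M(\vec{x_2})=e$ by reflexivity); the strict interpolation in \eq{QD1-4} or \eq{QD2-4} gives $\M(\vec{x_1},e)$ strictly between $\M(\vec{x_1})$ and $e$, whereas negligibility gives $\M(\vec{x_1},e)=\M(\vec{x_1})$, again a contradiction. Thus each listed condition rules out a negligible element.

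\textbf{Part B (repetition invariance).}
Here the key is to show $\M(\underbrace{x_1,\dots,x_1}_{m},\dots,\underbrace{x_n,\dots,x_n}_{m})=\M(x_1,\dots,x_n)$ for all $m$. For \eq{QD2-5}, I would argue by induction on $m$: the $m$-fold repetition of a vector $\vec{x}$ is the concatenation $(\vec{x},\vec{x}^{(m-1)})$ where $\vec{x}^{(m-1)}$ is its $(m-1)$-fold repetition; by the induction hypothesis $\M(\vec{x}^{(m-1)})=\M(\vec{x})$, so the two blocks have equal means, and \eq{QD2-5} (equal means are preserved under concatenation) yields that the whole has the same mean, closing the induction. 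For \eq{QD1-4}, the same inductive concatenation applies using the equality clause of \eq{QD1-4}: when the means $\M(\vec{x_i})$ coincide, the concatenated mean equals their common value, which is exactly what is needed at each induction step. The only subtlety is reducing the general repetition pattern (each $x_i$ repeated $m$ times, in block order) to a concatenation of copies of the single vector $(x_1,\dots,x_n)$; using symmetry \eq{QD1-2}/\eq{QD2-2} I can freely reorder entries so that the $m$-block-repeated vector is literally the concatenation of $m$ copies of $(x_1,\dots,x_n)$, after which the concatenation lemma applies verbatim.

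\textbf{Main obstacle.}
The routine parts are genuinely routine; the one place demanding care is \eq{QD1-3} and \eq{QD2-3} in Part A, because these are ``limiting'' or ``density'' statements rather than single-comparison inequalities, so the negligibility argument must be arranged so that padding by $e$ actually collides with the quantified conclusion (a vanishing increment, respectively a value trapped in an arbitrarily small window) rather than merely with one instance. I expect to spend most of the writing budget making those two reductions airtight, while the remaining five sub-implications are each a two-line comparison.
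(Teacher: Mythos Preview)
The paper does not supply a proof of this proposition at all: it merely asserts that the implications are ``easy to verify'' and moves on. So there is nothing to compare your approach against; the question is only whether your outline stands on its own.

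It does. The arguments for \eq{QD1-1}, \eq{QD1-4}, \eq{QD2-4} in Part~A and for both items in Part~B are correct as written (for \eq{QD1-4} you can in fact skip the induction: taking $k=m$ copies of the single block $(x_1,\dots,x_n)$ and invoking the equality clause once gives the result directly). Your two ``limiting'' cases are also fine once made explicit. For \eq{QD1-3}, set $y=e$ and any $x\neq e$; then for every $k$ the term at $m=1$ is $|\M(x,e,\dots,e)-\M(e,\dots,e)|=|x-e|$, so the maximum is bounded below by $|x-e|>0$ and cannot tend to zero. For \eq{QD2-3}, pick $x<u<v<y$ with $y=e$ if $e$ is not the left endpoint of $I$ (otherwise take $x=e$); then $\M(x,\dots,x,e,\dots,e)=x<u$ for every choice of $n,m\ge 1$, so no pair $(n,m)$ lands the value in $(u,v)$. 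These two lines are the whole content of what you flagged as the main obstacle; once inserted, the write-up is complete.
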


In fact this proposition provide us (in total) five choices of axioms to guarantee that both assumptions in Theorem~\ref{thm:RepInvT2} are simultaneously valid (obviously none of them implies directly that a mean is $\TYPE{2}$).

In the following two examples we show that repetition invariance and having no negligible element in the theorem above is unavoidable, respectively.
\begin{exa}
 Let $F \colon [3,4] \to \R^2$ and $G \colon \R_+^2 \to \R$  be given by $F(x):=(x^2,x)$ and $G(r,s)=r/s$, respectively. 
 
 Then we obtain, for all $n \in \N$ and $a \in [3,4]^n$,
\Eq{*}{
\Lv_{F,G}(a_1,\dots,a_n)=G(\sum_{i=1}^n F(a_i))=G(\sum_{i=1}^na_i^2,\sum_{i=1}^na_i)=\frac{\sum_{i=1}^na_i^2}{\sum_{i=1}^na_i}=\G_{2,1}(a_1,\dots,a_n),
}
that is $\Lv_{F,G}=\G_{2,1}|_{[3,4]}$.

Let $H \colon \R \times \big( [3,4] \cup [6,8] \cup [9,\infty) \big) \to \R$ be given by
\Eq{*}{
H(r,s):=
\begin{cases}
s &\qquad \text{ for } s \in [3,4];\\
s/2 &\qquad \text{ for } s \in [6,8];\\
r/s &\qquad \text{ for } s\in [9,+\infty). 
\end{cases}
}

\noindent In the simplest case, $n=1$, as $x_1 \in [3,4]$ we have $\Lv_{F,H}(x_1)=H(x_1^2,x_1)=x_1$.

\noindent For $n=2$ one gets $x_1+x_2 \in [6,8]$,  thus 
$\Lv_{F,H}(x_1,x_2)=H(x_1^2+x_2^2,x_1+x_2)=\tfrac12(x_1+x_2)$.

For $n\ge3$ we obtain $x_1+\dots+x_k \ge 9$, whence
\Eq{*}{
\Lv_{F,H}(x_1,\dots,x_n)=
\Lv_{F,G}(x_1,\dots,x_n)=\G_{2,1}(x_1,\dots,x_n)\:.
}

Binding all cases altogether we obtain
\Eq{*}{
\Lv_{F,H}(x_1,\dots,x_n)=
\begin{cases}
x_1 &\text{ for }n =1\,;\\
\frac{x_1+x_2}2 &\text{ for }n =2\,;\\
 \frac{x_1^2+\cdots+x_n^2}{x_1+\cdots+x_n}&\text{ for }n \ge 3\,.
\end{cases}
}
Then $\Lv_{F,H}$ is not repetition invariant as 
$\Lv_{F,H}(3,4)=\tfrac72\ne\tfrac{25}7 =\Lv_{F,H}(3,3,4,4)$.
Thus $\Lv_{F,H}$ is a mean of type $\TYPE{2}$ which is not a Bajraktarevi\'c mean. 

Obviously we also have $\Lv_{F,G}$ and $\Lv_{F,H}$ are two different means, which implies that the function $F$ is not alone sufficient to determine a mean.
\end{exa}

\begin{exa}
 Let $F \colon \R\to \R^2$ be given by $F(x)=(x^3,x^2)$. 
 Let $D:=\{(u,v)\in \R^2\colon \abs{u}\le \abs{v}^{3/2}\}$ and consider a function $G\colon D \to \R$ given by
 \Eq{*}{
 G(u,v)&:=
 \begin{cases}
 0 \qquad &\text{ for }(u,v)=(0,0);\\
u/v \qquad &\text{ otherwise}.
\end{cases}
 }
 
First we need to provide that $G$ is continuous. In fact the only nontrivial point is $(0,0)$. However, for $\varepsilon>0$, $v \in (-\varepsilon,\varepsilon) \setminus\{0\}$ and $u \in (-|v|^{3/2},|v|^{3/2})$ we have 
\Eq{*}{
\abs{G(u,v)}=\abs{u\cdot \frac{u^2}{v^3}}^{1/3}\le|u|^{1/3}\le|v|^{1/2}\le\varepsilon^{1/2},
}
what implies that $G$ is a continuous at $(0,0)$ and, as a consequence, $G$ is continuous.

Now we prove that $\omega F(\R) \subset D$ or, equivalently,
\Eq{G23CE}{
\Big|\sum_{i=1}^n x_i^3\Big| \le \Big|\sum_{i=1}^n x_i^2\Big|^{3/2} \text{ for all }n \in \N \text{ and }x \in \R^n.
}
One can assume that all $x_i$-s are positive (by omitting all zeros and replacing $x_i$ by $|x_i|$) and rewrite \eq{G23CE} in a form
\Eq{*}{
\G_{3,2}(x_1,\dots,x_n)^2 \le \sum_{i=1}^n x_n^2 \qquad (n\in \N,\,x\in\R_+^n).
}

However $\G_{3,2}(x_1,\dots,x_n)^2 \le \max(x_1,\dots,x_n)^2\le x_1^2+\dots+x_n^2$, what ends the proof of \eq{G23CE}. Thus we obtain
\Eq{*}{
\Lv_{F,G}(x_1,\dots,x_n)=
\begin{cases}
0 & \text{for }x_1=\cdots=x_n=0,\\
\frac{x_1^3+\dots+x_n^3}{x_1^2+\dots+x_n^2} & \text{otherwise.}
\end{cases}
}
This is obviously the continuous, repetition invariant mean defined of $\R$ with a negligible element (equals $0$) which is of type $\TYPE{2}$. Thus $\Lv_{F,G}$ is not a Bajraktarevi\'c mean as these means have no negligible element. In the same way $\Lv_{F,G}$ is not a deviation (or even a semideviation) mean.
\end{exa}

\section{\label{sec:hightypes}Means of higher types}

At the moment we intend to show some means of higher types. To this end, for every vector $(x_1,\dots,x_n)$ having all positive entries define 
\Eq{*}{
\gamma_{p_1,\dots,p_s}&:=
\begin{cases}\sum\limits_{\substack{i_1,\dots,i_s\\ i_k \ne i_l}}x_{i_1}^{p_1} \cdots x_{i_s}^{p_s} &\text { for }s \ge n;\\[1mm]
0 & \text{ for }s <n.
\end{cases}\\
\sigma_{s,p}&:=\dfrac{\gamma_{p,\dots,p}}{s!}=\begin{cases}\sum\limits_{1\le i_1<\cdots<i_s \le n}x_{i_1}^{p} \cdots x_{i_s}^{p} &\text { for }s \ge n;\\[1mm]
0 & \text{ for }s <n.
\end{cases}
}
The following technical lemma is of essential importance 
\begin{lem} \label{lem:sym}
Let $s \in \N$ and $(p_1,\dots,p_s)$ be a vector of real numbers. Define the set 
\Eq{*}{
Q:=\Big\{\sum_{i \in T} p_i \colon T \subset \{1,\dots,s\} \wedge T \ne \emptyset\Big\}.
}
Then $\gamma_{p_1,\dots,p_s} \in \Z[\gamma_q \colon q \in Q]$. In particular $\sigma_{s,p} \in \Z[\gamma_p,\gamma_{2p},\dots,\gamma_{sp}]$.
\end{lem}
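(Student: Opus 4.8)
The plan is to prove the identity $\gamma_{p_1,\dots,p_s} \in \Z[\gamma_q : q\in Q]$ by induction on $s$, using an inclusion--exclusion argument to express the "all-distinct-indices" power sum $\gamma_{p_1,\dots,p_s}$ in terms of products of lower-order $\gamma$'s (with possibly merged exponents) and lower-order $\gamma$'s themselves. The base case $s=1$ is trivial: $\gamma_{p_1}=\sum_i x_i^{p_1}$ is itself of the required form since $p_1\in Q$. For the inductive step I would start from the \emph{unrestricted} product
\[
\gamma_{p_1,\dots,p_{s-1}}\cdot\gamma_{p_s}=\Big(\sum_{\substack{i_1,\dots,i_{s-1}\\ \text{distinct}}}x_{i_1}^{p_1}\cdots x_{i_{s-1}}^{p_{s-1}}\Big)\Big(\sum_{j}x_j^{p_s}\Big),
\]
and split the right-hand side according to whether the new index $j$ coincides with one of $i_1,\dots,i_{s-1}$ or is distinct from all of them. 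The "distinct" part is exactly $\gamma_{p_1,\dots,p_s}$; the "coincident" part, where $j=i_k$, contributes $\sum_{k=1}^{s-1}\gamma_{p_1,\dots,p_{k-1},\,p_k+p_s,\,p_{k+1},\dots,p_{s-1}}$, each summand being a $\gamma$ with $s-1$ exponents whose exponent set (sums over nonempty subsets) is contained in $Q$. Rearranging gives
\[
\gamma_{p_1,\dots,p_s}=\gamma_{p_1,\dots,p_{s-1}}\,\gamma_{p_s}-\sum_{k=1}^{s-1}\gamma_{p_1,\dots,p_{k-1},\,p_k+p_s,\,p_{k+1},\dots,p_{s-1}}.
\]

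Now I would apply the inductive hypothesis to every $\gamma$ on the right-hand side: the factor $\gamma_{p_1,\dots,p_{s-1}}$ lies in $\Z[\gamma_q:q\in Q']$ with $Q'=\{\sum_{i\in T}p_i: \emptyset\ne T\subseteq\{1,\dots,s-1\}\}\subseteq Q$; the factor $\gamma_{p_s}$ is $\gamma_q$ for $q=p_s\in Q$; and each correction term $\gamma_{p_1,\dots,p_k+p_s,\dots,p_{s-1}}$ has $s-1$ exponents, so by induction it lies in $\Z[\gamma_r : r\in R_k]$ where $R_k$ is the set of nonempty-subset-sums of the multiset $\{p_1,\dots,p_{k-1},p_k+p_s,p_{k+1},\dots,p_{s-1}\}$; one checks $R_k\subseteq Q$ because every such subset-sum is a subset-sum of the original $\{p_1,\dots,p_s\}$ over a nonempty index set. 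Since $\Z[\gamma_q:q\in Q]$ is closed under addition, subtraction, and multiplication, the whole expression lies in $\Z[\gamma_q:q\in Q]$, completing the induction. The final sentence, $\sigma_{s,p}\in\Z[\gamma_p,\gamma_{2p},\dots,\gamma_{sp}]$, then follows immediately: taking $p_1=\dots=p_s=p$ gives $Q=\{p,2p,\dots,sp\}$, and $\sigma_{s,p}=\gamma_{p,\dots,p}/s!$ differs from $\gamma_{p,\dots,p}$ only by the factor $1/s!$, but one must verify the division stays within $\Z[\gamma_p,\dots,\gamma_{sp}]$ — this is the classical Newton/Girard identity situation, so I would instead note that $\sigma_{s,p}$ is the elementary symmetric polynomial of degree $s$ in $x_1^p,\dots,x_n^p$, and Newton's identities express it as a $\Z$-polynomial (with denominators clearing) in the power sums $\gamma_p,\dots,\gamma_{sp}$; alternatively one reruns the same inclusion--exclusion directly on $\sigma_{s,p}$ keeping track of integrality.

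The main obstacle I anticipate is \textbf{bookkeeping of the exponent sets and integrality}, not the structural idea. Specifically: (i) confirming carefully that when two exponents $p_k,p_s$ are merged into $p_k+p_s$, every nonempty-subset-sum of the resulting $(s-1)$-element multiset is still a nonempty-subset-sum of the original $s$-element multiset (so $R_k\subseteq Q$) — this is true but needs the observation that a subset of the merged multiset corresponds to a subset of the original one, with the merged element standing for $\{k,s\}$ together; and (ii) the passage to $\sigma_{s,p}$, where dividing $\gamma_{p,\dots,p}$ by $s!$ could a priori leave the integer ring — this is resolved by recognizing $\sigma_{s,p}$ as an elementary symmetric function and invoking the fundamental theorem of symmetric polynomials / Newton's identities over $\Z$, or by a separate direct inclusion--exclusion for $\sigma$ that never introduces the factorial. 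Everything else is routine algebra in the polynomial ring $\Z[\gamma_q:q\in Q]$.
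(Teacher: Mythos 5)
Your proof of the main claim is essentially identical to the paper's: expand the product $\gamma_{p_1,\dots,p_{s-1}}\gamma_{p_s}$ according to whether the new summation index coincides with one of the old ones, solve for $\gamma_{p_1,\dots,p_s}$, and apply the inductive hypothesis to the merged-exponent terms after checking that their nonempty subset-sums still lie in $Q$. That is exactly the paper's argument (written there with a new exponent $p_0$ adjoined rather than $p_s$, a pure relabeling), and your bookkeeping of the exponent sets is correct.

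Where you diverge is the ``in particular'' clause, and there your instinct is better than your proposed fix. You are right that dividing $\gamma_{p,\dots,p}$ by $s!$ is the delicate point --- the paper passes over it in silence --- but invoking Newton's identities ``over $\Z$'' does not repair it: Newton's identities give the power sums as integer polynomials in the elementary symmetric functions, while the inverse direction necessarily carries denominators. Already for $s=2$ one has $\sigma_{2,p}=\tfrac12\bigl(\gamma_p^2-\gamma_{2p}\bigr)$, and since $\gamma_p,\gamma_{2p}$ are algebraically independent for $n\ge 2$ this representation is unique, so $\sigma_{2,p}\notin\Z[\gamma_p,\gamma_{2p}]$. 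Consequently neither Newton's identities nor your alternative of ``rerunning the inclusion--exclusion keeping track of integrality'' can succeed: the clause as stated over $\Z$ is false, and the correct conclusion is $\sigma_{s,p}\in\Q[\gamma_p,\dots,\gamma_{sp}]$ (equivalently, coefficients in $\Z[1/s!]$), which is what your main identity yields after dividing by $s!$. This weaker statement is all the paper ever uses, since the applications (Hamy, symmetric polynomial, and biplanar means) only require that $\sigma_{s,p}$ be a continuous function of $\gamma_p,\dots,\gamma_{sp}$.
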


\begin{proof}
 We prove it by induction with respect to $s$. 
For $s=1$ we get $\gamma_{p_1} \in \Z[\gamma_{p_1}]$ which is a trivial statement.

Now take and vector $(p_0,p_1,\dots,p_s)$ of real numbers.
Then we can easy verify that whenever $n> s$ we have
\Eq{*}{
\gamma_{p_1,\dots,p_s}\gamma_{p_0}&=\gamma_{p_0,p_1,\dots,p_s}
+\gamma_{p_1+p_0,p_2,\dots,p_s}
+\gamma_{p_1,p_2+p_0,\dots,p_s}
+\dots
+\gamma_{p_1,p_2,\dots,p_s+p_0},\\
\gamma_{p_0,p_1,\dots,p_s}&=
\gamma_{p_1,\dots,p_s}\gamma_{p_0}
-\gamma_{p_1+p_0,p_2,\dots,p_s}
-\gamma_{p_1,p_2+p_0,\dots,p_s}
-\dots
-\gamma_{p_1,p_2,\dots,p_s+p_0}.
}
Therefore
\Eq{Alg1}{
\gamma_{p_0,\dots,p_s} \in \Z[\gamma_{p_0},\gamma_{p_1,\dots,p_s}
,\gamma_{p_1+p_0,p_2,\dots,p_s}
,\gamma_{p_1,p_2+p_0,\dots,p_s}
,\dots
,\gamma_{p_1,p_2,\dots,p_s+p_0}
]\:.
}
However, if one define
\Eq{*}{
Q_0:= \Big\{\sum_{i \in T} p_i \colon T \subset \{0,\dots,s\} \wedge T \ne \emptyset\Big\}
}

Then $\gamma_{p_1,\dots,p_s} \in \Z[\gamma_q \colon q \in Q] \subset \Z[\gamma_q \colon q \in Q_0]$. Furthermore, using inductive assumption we obtain 
\Eq{*}{
\gamma_{p_1,p_2,\dots,p_i+p_0,\dots,p_s} \in \Z[\gamma_q \colon q \in Q_0] \qquad \text{ for all }i \in \{1,\dots,s\}.
}
Finally, in view of \eq{Alg1} one gets $\gamma_{p_0,\dots,p_s} \in \Z[\gamma_q \colon q \in Q_0]$ what concludes the proof.
\end{proof}

\begin{exa}[Hamy means]
Let $r \in \N$ and $\mathfrak{ha}_r \colon \bigcup_{n=1}^\infty \R_+^n \to \R_+$ be given by
\Eq{*}{
\mathfrak{ha}_r(x_1,\dots,x_n):=
\begin{cases}
 \frac{x_1+\dots+x_n}{n} &\text{ for }n < r \\
 {\binom nr}^{-1} \sum\limits_{1\le i_1<\cdots<i_r \le n} \sqrt[r]{x_{i_1}\cdots x_{i_r}} & \text{ for }n \ge r
\end{cases}
}

One can rewrite it in a compact form 
\Eq{*}{
\mathfrak{ha}_r(x_1,\dots,x_n):= 
\begin{cases}
n^{-1}\gamma_1 & \text{ for }n < r, \\
{\binom nr}^{-1}\sigma_{r,1/r} & \text{ for }n \ge r.
\end{cases}
}
Using Lemma~\ref{lem:sym} we have $\sigma_{r,1/r}\in \Z[\gamma_{1/r},\gamma_{2/r},\dots,\gamma_{r/r}]$. Based on this there exists a continuous function $G \colon \R^{r+1} \to \R$ such that $\mathfrak{ha}_r(x_1,\dots,x_n)=G(\gamma_{1/r},\gamma_{2/r},\dots,\gamma_{r/r},n)$ what implies that $\mathfrak{ha}_r \in \TYPEP{r}$. 

Let us emphasize that it is still an open problem if $\mathfrak{ha}_r \in \TYPE{r}$. In a sense we obtain only some upper estimation of the complexity of Hamy means.
\end{exa}

\begin{exa}[Symmetric polynomial means]
Let $r \in \N$ and $\mathfrak{s}_r \colon \bigcup_{n=1}^\infty \R_+^n \to \R_+$ be given by
\Eq{*}{
\mathfrak{s}_r(x_1,\dots,x_n):=
\begin{cases}
 \frac{x_1+\dots+x_n}{n} &\text{ for }n < r \\
\Big({\binom nr}^{-1} 
\sum\limits_{1\le i_1<\cdots<i_r \le n} {x_{i_1}\cdots x_{i_r}}\Big)^{1/r}
 & \text{ for }n \ge r
\end{cases}
} 
Similarly like in a case of Hamy means we obtain $\mathfrak{s}_r \in \TYPEP{r}$ for $r \in \N$.
\end{exa}

Let us emphasize that, for all $r \in \N$ and $n \ge r$ we have
$\big(\mathfrak{ha}_r(x_1^r,\dots,x_n^r)\big)^{1/r}=\mathfrak{s}_r(x_1,\dots,x_n)$.
This property refers to so-called conjugation of means; see \cite{ChuPalPas19} and \cite[section~VI.4.2]{Bul03}. In fact we can prove that, for each $r\in \N$, the means $\mathfrak{ha}_r$ and $\mathfrak{s}_r$ have the same complexity and, moreover, their minimal freedom spaces are isomporphic.

\begin{exa}[Biplanar means]
 For $p,q \in \R$ and $c,d \in \N$ with $cp\ne dq$ define a mean on $\R_+$ by
 \Eq{*}{
 \mathfrak{Bi}_{p,q,c,d}(x_1,\dots,x_n):=\begin{cases}
\left( \dfrac{{n \choose d}\sigma_{c,p}}{{n \choose c}\sigma_{d,q}}\right)^{1/(cp-dq)} &\qquad \text{whenever }n \ge \max(c,d),\\
\P_p(x_1,\dots,x_n) &\qquad\text{otherwise.}
                                         \end{cases}
 }

Then $ \mathfrak{Bi}_{p,q,c,d}$ is a function of $(\gamma_0,\gamma_p,\gamma_{2p},\dots,\gamma_{cp},\gamma_q,\gamma_{2q},\dots,\gamma_{dq})$. Thus $\mathfrak{Bi}_{p,q,c,d}  \in\TYPEP{k}$, where $k:=|\{p,2p,\dots,cp,q,2q,\dots,dq\}|$. 

We have a trivial inequality $k \le c+d$ which lead to a fact that $\mathfrak{Bi}_{p,q,c,d} \in \TYPEP{c+d}$, however we can obtain better estimations in a particular cases. 
Indeed, for $(p,q,c,d):=(2,3,3,3)$ we have $k=|\{2,4,6,3,6,9\}|=5$ so  $\mathfrak{Bi}_{2,3,3,3}\in \TYPEP5$ (instead of $\TYPEP{3+3}=\TYPEP6$).
\end{exa}

\section{Proof of Theorem~\ref{thm:RepInvT2} and auxiliary results}

First, it is easy to verify that Bajraktarevi\'c means have no negligible elements. Indeed, assume that $e$ is a negligible element. Then we have, for all $x \in I$,
\Eq{*}{
x=\B_{f,g}(x,e)&=\Big(\frac{f}{g}\Big)^{-1}\bigg(\frac{f(x)+f(e)}
                      {g(x)+g(e)}\bigg) \\
\frac{f(x)}{g(x)}&=\frac{f(x)+f(e)}
                      {g(x)+g(e)}\\
f(x)g(x)+f(x)g(e) &=f(x)g(x)+g(x)f(e) \\
\tfrac{f(x)}{g(x)} g(e)&=f(e)
}

\noindent As $\tfrac fg$ is a 1-1 we obtain $f(e)=g(e)=0$ contradicting the assumption. Second, we can easily check that Bajraktarevi\'c means are repetition invariant $\TYPE{2}$ means. The nontrivial part is to reverse this implication.

\begin{rem} \label{rem:inv}
Take $n\in \N$ and a pair of functions $F \colon I \to \R^n$ and $G \colon \R^n \to I$. Then for every invertible linear mapping $L \colon \R^n \to \R^n$, we have $\Lv_{F,G}=\Lv_{L\circ F,G \circ L^{-1}}$.
\end{rem}
 

\begin{lem}\label{lem:pom1}
Let $I$ be a closed interval and $F \colon I \to \R^2$ and $G \colon \R^2 \to I$ be two continuous functions such that $\Lv_{F,G}$ is a repetition invariant mean without negligable element.

If $F(x)=(f(x),g(x))$ for $f,g\colon I \to \R$ then there exists $\alpha,\beta \in \R$ such that the function $\alpha f+\beta g$ is nowhere vanishing.
\end{lem}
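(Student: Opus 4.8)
The plan is to argue by contradiction: suppose that for every pair $(\alpha,\beta)\in\R^2$ the function $\alpha f+\beta g$ vanishes somewhere on $I$. Equivalently, every line through the origin in the target $\R^2$ is met by the curve $F(I)=\{(f(x),g(x)):x\in I\}$; said yet another way, the radial projection of $F(I)\setminus\{0\}$ onto directions in $\R^2$ is surjective onto the full set of directions. I will extract from this a vector $w=(x_1,\dots,x_n)\in I^n$ with $F(x_1)+\cdots+F(x_n)=0$ (a ``balanced'' tuple), and then play this against repetition invariance and the absence of a negligible element to reach a contradiction, much in the spirit of the proof of Proposition~\ref{prop:T1}.

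First I would note that $0$ cannot lie in $F(I)$ itself: if $F(x_0)=0$ then by reflexivity $x_0=G(nF(x_0))=G(0)$ for every $n$, and one checks (using $\Lv_{F,G}(x_0,a)=G(F(x_0)+F(a))=G(F(a))=\Lv_{F,G}(a)$) that $x_0$ would be a negligible element, contradicting the hypothesis. So $F(I)\subset\R^2\setminus\{0\}$, and since $I$ is a closed interval and $F$ is continuous, $F(I)$ is a connected compact subset of the punctured plane. Now, if the directions hit by $F(I)$ did not fill an open half-turn, then $F(I)$ would sit inside an open half-plane $\{v:\langle v,u\rangle>0\}$ for some $u$, and $\alpha f+\beta g$ with $(\alpha,\beta)=u$ would be strictly positive, contradicting the assumption. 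Hence the directions of $F(I)$ are not confined to any open half-plane through $0$; by connectedness and compactness one can then choose finitely many points whose $F$-images are positive multiples of vectors that positively span $\R^2$ (a Carathéodory-type selection), i.e.\ there are $x_1,\dots,x_m\in I$ and positive reals $\lambda_i$ with $\sum_i \lambda_i F(x_i)=0$. Approximating the $\lambda_i$ by rationals with a common denominator and clearing, and using that $0$ is isolated from $F(I)$ only in the sense above (here a density/limit argument over rational combinations, exactly as in Proposition~\ref{prop:T1}), one gets positive integers $k_i$ with $\sum_i k_i F(x_i)=0$; writing out the tuple $w$ that lists each $x_i$ exactly $k_i$ times gives $\sum_{j}F(w_j)=0$.

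From such a balanced tuple $w=(w_1,\dots,w_N)$ with $\sum_j F(w_j)=0$ I would derive the contradiction as follows. Repetition invariance gives $\Lv_{F,G}(w,w,\dots,w)$ ($t$ copies) $=\Lv_{F,G}(w)$ for all $t$, but $\sum F$ over $t$ copies of $w$ is $t\cdot 0=0$, so $\Lv_{F,G}(w)=G(0)$; more usefully, for an arbitrary tuple $a=(a_1,\dots,a_n)$ we get $\Lv_{F,G}(a,w)=G\bigl(\sum_i F(a_i)+\sum_j F(w_j)\bigr)=G\bigl(\sum_i F(a_i)\bigr)=\Lv_{F,G}(a)$, so appending $w$ never changes the value. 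Applying this with $a$ a constant tuple $(x,\dots,x)$ ($n$ copies, $n$ chosen so that $w$ together with it is admissible) and invoking reflexivity and repetition invariance, one forces the value of $\Lv_{F,G}$ on the mixed tuple of $x$'s and $w_j$'s to equal $x$ regardless of $x$; but the $w_j$'s are fixed, and by the mean property the value must lie between $\min$ and $\max$ of $\{x\}\cup\{w_j\}$, which pins it down and contradicts it being identically $x$ as $x$ ranges over $I$ (concretely, take $x$ outside the convex hull of the $w_j$'s inside $I$, possible since $I$ is an interval properly containing those finitely many points, unless $w$ is itself constant — and a constant balanced tuple would force $F$ of that point to be $0$, already excluded). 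This yields the contradiction and proves the lemma.

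The main obstacle I anticipate is the passage from ``every line through $0$ meets the curve $F(I)$'' to an honest \emph{integer} relation $\sum k_i F(x_i)=0$: one must combine (i) a convex-geometry argument that $0$ is in the interior of the convex cone generated by $F(I)$, hence a positive-coefficient relation exists, with (ii) a rational-approximation step, and (iii) care that the approximation does not accidentally leave $\omega F(I)$ — which is harmless here because $G$ is defined on all of $\R^2$, not merely on $\omega F(I)$, in the hypotheses of this lemma. A secondary subtlety is handling the degenerate case where $F(I)$ lies on a single line through $0$ (so the curve is ``one–dimensional''): then already two points $x_1\ne x_2$ with $F(x_i)$ antipodal, or with a rational ratio as in Proposition~\ref{prop:T1}, furnish the balanced tuple directly, so this case is in fact easier. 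Everything else is bookkeeping with repetition invariance and the mean property.
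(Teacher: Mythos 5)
Your opening reduction is sound and matches the paper's starting point: the absence of a negligible element forces $F(x)\ne(0,0)$, and the failure of the conclusion means $F(I)$ is not contained in any open half-plane through the origin, hence $0\in\mathrm{conv}(F(I))$. But the rest of the argument has two genuine gaps, the second of which is fatal to the whole strategy. First, the passage from a positive real relation $\sum_i\lambda_iF(x_i)=0$ to an integer relation $\sum_ik_iF(x_i)=0$ does not follow from ``approximating the $\lambda_i$ by rationals'': for fixed points the cone of positive relations is typically a single irrational ray (e.g. $F(x_1)=(1,0)$, $F(x_2)=(-\sqrt2,1)$, $F(x_3)=(0,-1)$ admit no rational relation at all), and repairing this by moving the points requires an interior/open-mapping argument that you only gesture at. The paper obtains exactly this missing ingredient from the Banakh--Jab\l{}o\'nska--Jab\l{}o\'nski theorem that $F(I)+F(I)$ has nonempty interior for a non-flat continuum; combined with four sign points ($f=0,g>0$; $f=0,g<0$; $g=0,f>0$; $g=0,f<0$) this shows that $\omega F(I)$ contains a full neighbourhood of the origin, in fact $\omega F(I)=\R^2$.

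Second, and more importantly, even granted a balanced tuple $w$ with $\sum_jF(w_j)=0$, your concluding contradiction does not materialize. You correctly derive $\Lv_{F,G}(x,w_1,\dots,w_N)=x$ for every $x\in I$, but this never violates the mean property: $x$ always lies between $\min(\{x\}\cup\{w_j\})$ and $\max(\{x\}\cup\{w_j\})$ (when $x>\max_jw_j$ the maximum of the tuple is $x$ itself, and a mean is permitted to equal the maximum --- the inequalities in the mean property are not strict). A single balanced tuple only says that appending $w$ preserves values, which is a ``negligible tuple'' phenomenon not excluded by the hypothesis of no negligible \emph{element}. The contradiction genuinely needs the continuity of $G$: once $\omega F(I)$ contains a neighbourhood of the origin, repetition invariance gives $G(ty)=G(y)$ for all $t\in\Q_+$, and letting $t\to0^+$ and using continuity of $G$ at $(0,0)$ forces $G$ to be constant, contradicting the surjectivity of $G$ from Remark~\ref{rem:1-1onto}. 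Your argument never invokes the continuity of $G$, so it cannot close; you would need to replace both the exact-integer-relation step and the final step by the neighbourhood-of-zero argument.
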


\begin{proof}
Note that $f + \alpha g$ is vanishing if and only if $\alpha \in (-f/g)(I)$, similarly $g+ \beta f$ is vanishing at some point if and only if $\beta \in -(g/f)(I)$.

Therefore either $\alpha f+\beta g$ is nonvanishing for some numbers $\alpha,\beta \in \R$ or 
\Eq{ERim}{
(\tfrac fg)(\{x \in I \colon g(x)\ne 0\} )=(\tfrac gf)(\{x \in I \colon f(x)\ne 0\} )=\R.
}

Indeed, if say $\beta \notin (\tfrac fg)(\{x \in I \colon g(x)\ne 0\} )$ then $\beta g(x) \ne f(x)$ for all $x \in I$ (recall that $g(x)=0$ implies $f(x) \ne 0$). Then obviously $f-\beta g$ is nonvanishing. The second equality is analogous.

But \eq{ERim} implies (as $I$ is a closed set and $F(x) \ne (0,0)$) that there exist four points $x_1,x_2,x_3,x_4 \in I$ such that:
\Eq{*}{
f(x_1)=0 \text{ and } g(x_1) >0 \qquad 
f(x_2)=0 \text{ and } g(x_2) <0 \\
f(x_3)>0 \text{ and } g(x_3) =0 \qquad
f(x_4)<0 \text{ and } g(x_4) =0 \\
}

Then $\omega F(I) \subset (\N f(x_3)+\N f(x_4)) \times (\N g(x_1)+\N g(x_2))$. Define $\delta:=2\max\{g(x_1),-g(x_2),f(x_3),-f(x_4)\}$.

By \cite{BanJabJab19}, $F(I)+F(I)$ has a nonempty interior, i.e. $R_r(x,y) \subset F(I)+F(I)$ for some $r>0$ and $(x,y) \in \R$, where $R_r(x,y):=[x-r,x+r] \times [y-r,y+r]$. Thus $R_{kr}(kx,ky) \subset 2k F(I)$. Take $k_0 \in \N$ such that $k_0r>\delta$. Then $R_{\delta}(k_0x,k_0y) \subset 2k_0F(I)$.

In view of definition of $\delta$ there exists $C_1,C_2,C_3,C_4 \in \N$ such that 
\Eq{*}{
(C_3f(x_3)+C_4f(x_4),C_1 g(x_1)+C_2g(x_2)) \in R_{\delta/2}(-k_0x,-k_0y).
}
Thus
\Eq{*}{
(2k_0+C_1+C_2+C_3+C_4) F(I) &\supset 2k_0 F(I)+ (C_3f(x_3)+C_4f(x_4),C_1 g(x_1)+C_2g(x_2)) \\
(2k_0+C_1+C_2+C_3+C_4) F(I)&\supset R_\delta(k_0x,k_0y)+\mu \quad \text{ for some } \mu \in R_{\delta/2}(-k_0x,-k_0y)\\
(2k_0+C_1+C_2+C_3+C_4) F(I)&\supset R_\delta(0,0)+\mu \quad \text{ for some } \mu \in R_{\delta/2}(0,0) \\
(2k_0+C_1+C_2+C_3+C_4) F(I)&\supset R_{\delta/2}(0,0).
}

It implies $\omega F(I) \supset \omega R_{\delta/2}(0,0)=\R^2$. As $\Lv_{F,G}$ is repetition invariant we have $G(\N_+ \cdot y)=G(y)$ for all $y \in \R^2$ and, applying this equality twice, $G(\Q_+ \cdot y)=G(y)$ for all $y \in \R^2$. As $G$ is continuous at $(0,0)$ we obtain $G$ that $G$ is constant contradicting Remark~\ref{rem:1-1onto}. 
\end{proof}
%
%
%
%

\begin{lem}\label{lem:Jabl}
 Let $X \subset \R_+^2$ be a connected set. Then either $X$ is contained in a line or for all directions $\lambda \in \interior \{\lambda \in S^1 \colon \lambda\cdot \R_+ \cap X \ne \emptyset \}=:\Lambda$ there exists $M_\lambda \in \R$ such that $M\cdot \lambda \in  \omega X$ for all $M > M_\lambda$.
\end{lem}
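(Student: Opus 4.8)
The plan is to work in polar-type coordinates adapted to the cone spanned by $X$. Fix a connected set $X \subset \R_+^2$ not contained in a line. First I would analyze the set of rays through the origin that meet $X$: writing each point of $X$ in the form $M\lambda$ with $M>0$ and $\lambda \in S^1 \cap \R_+^2$, the assignment $M\lambda \mapsto \lambda$ is continuous on $X$, so its image is a connected subset of the (one-dimensional) circular arc $S^1 \cap \R_+^2$, hence an interval; call its interior $\Lambda$. Since $X$ is not contained in a line, this interval is nondegenerate, so $\Lambda \ne \emptyset$. Now fix $\lambda \in \Lambda$. The goal is: every sufficiently large multiple $M\lambda$ lies in $\omega X = \bigcup_{n\ge 1} nX$.

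The key step is to produce, for directions $\lambda', \lambda'' \in \Lambda$ on either side of $\lambda$, points $x' = M'\lambda' \in X$ and $x'' = M''\lambda'' \in X$, and then to observe that the open cone between the rays through $x'$ and $x''$ is ``filled out'' by the Minkowski sums $nX$. Concretely, I would argue as follows. Pick $\lambda', \lambda'' \in \Lambda$ with $\lambda$ strictly between them (possible since $\lambda \in \interior \Lambda$), and pick $x' = M'\lambda' \in X$, $x'' = M''\lambda'' \in X$. For natural numbers $a,b$, the point $a x' + b x''$ lies in $(a+b)X$ only if we can realize it as a sum of $a+b$ points of $X$ — here I would use that $x', x'' \in X$ and $a+b$ is the number of summands, so $ax' + bx'' \in (a+b)X$ directly. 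As $a,b$ range over $\N$, the points $ax' + bx''$ form a lattice inside the open cone $C$ spanned by $x'$ and $x''$, with the property that the lattice ``mesh'' at scale $\sim a+b$ has bounded diameter (diameter $\le M' + M''$, say, between nearest neighbors that use the same total number of summands $a+b=N$). Hence any point $p$ in $C$ at distance $\ge N_0 := $ (some explicit bound depending on $M', M''$ and the opening angle) from the origin lies within distance $M'+M''$ of some $ax'+bx''$ with $a+b = N$ for a suitable $N$; one then needs a point of $(N{+}1)X$ or a nearby larger multiple to hit $p$ exactly — this is where I would invoke a small additional argument, e.g. that since $X$ has more than one point, adding further summands lets one adjust the sum continuously enough, or more cleanly: replace the two-point input $\{x', x''\}$ by a short arc of $X$ near one of them, whose Minkowski multiples sweep out a genuinely two-dimensional region rather than a lattice. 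In particular, applying this with $p = M\lambda$ for all large $M$ gives the desired $M_\lambda$.

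The cleanest route, and the one I would ultimately write up, is: since $X$ is connected and meets at least two distinct rays, it contains a non-degenerate continuum $A$; the Minkowski sum $A + A$ then contains a two-dimensional set (a ``parallelogram-like'' neighborhood), after which one argues exactly as in the proof of Lemma~\ref{lem:pom1} — there a result of \cite{BanJabJab19} or \cite{BanJabJab19}-type reasoning was used to get that $F(I)+F(I)$ has nonempty interior, and then dilated copies $nX \supset$ (dilated boxes) cover every far-away point in the relevant cone. Translating ``nonempty interior of $A+A$'' into ``every ray direction in $\Lambda$ is eventually covered'' is then a routine compactness/scaling argument: a box $R \subset A+A$ of inradius $r$ gives boxes of inradius $kr \to \infty$ inside $2kX$, centered along a fixed ray, and combining finitely many such (one per summand count, or translating by lattice points $ax'+bx''$ as above) sweeps the full open subcone.

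The main obstacle I anticipate is the passage from the lattice $\{ax' + bx''\}$ (which only gives that $\omega X$ is \emph{dense} in the cone at large scales) to the statement that \emph{every} large multiple $M\lambda$ is actually \emph{in} $\omega X$. Resolving this requires genuinely using two-dimensionality of $X+X$ (not just two rays), i.e.\ that $X$ contains an arc, so that the Minkowski sums contain solid boxes rather than isolated points; establishing that $A+A$ has nonempty interior for a non-degenerate continuum $A$ is the technical heart, and I would lean on the cited \cite{BanJabJab19} exactly as Lemma~\ref{lem:pom1} does. The restriction to $\interior \Lambda$ (rather than all of $\Lambda$) is precisely what makes this work: near the boundary rays the covering boxes might stick out of $\R_+^2$ or out of the cone, so one needs a little room on each side, which is available for interior directions.
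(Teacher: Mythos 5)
Your plan is essentially the paper's own proof: the paper also takes two points $\xi_-,\xi_+\in X$ whose directions straddle $\lambda$, observes that $\N\xi_-+\N\xi_+$ is a net in a truncated subcone, and converts density into actual membership by adding a ball $R_\varepsilon(s)\subset X+X$ obtained from \cite{BanJabJab19}, finishing with the same scaling argument for interior directions. The only detail to pin down when writing it up is the one you already flagged: choose $\xi_-,\xi_+$ close enough together (the paper takes both in a ball of radius $\varepsilon/4$ about a point on the ray) so that the lattice mesh is smaller than the radius of the ball contained in $X+X$.
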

\begin{proof}
 By \cite{BanJabJab19} there exists a ball $R_{\varepsilon}(s) \subset X+X$ with a ratio $\varepsilon>0$ and a center $s \in \R^2$. Define a projection $\pi \colon \R_+^2 \to S^1$ and introduce a natural order $\prec$ on $S^1 \cap \R_+^2$. Then, by the definition, $\Lambda = \interior \pi(X)$.
 
 Fix $\lambda \in \Lambda$ and $\xi_0 \in \pi^{-1}(\lambda)$. 
 
There exist elements $\xi_-,\xi_+ \in R_{\varepsilon/4}(\xi_0) \cap X$ such that $\pi(\xi_-) \prec \pi(\xi_0)\prec \pi(\xi_+)$. Then, for some $K>0$, the set $G:=\N \xi_-+\N \xi_+$ is an $\tfrac\varepsilon2$-net in a set 
\Eq{*}{
A:=\big\{a \in \R_+^2 \colon \norma{a}_2\ge K \text{ and }\pi(\xi_-)\prec \pi(a) \prec \pi(\xi+) \big\}.
}
 Consequently $\omega X \supset G+R_\varepsilon(s) \supset s+A$. 
 
To prove that there exists $M_\lambda\in \N$ such that $M \lambda \in \omega X$ for all $M>M_\lambda$. As a metter of fact we prove the same with $\omega X$ replaced by $s+A$. Equivalently $ \lambda-\tfrac{1}{M} s \in \tfrac1M \cdot A$ for all $M>M_\lambda$. 

As for $M'>M$ we have $\tfrac1M \cdot A \subset \tfrac1{M'} \cdot A$, it suffices to prove that there exists a pair $(K_\lambda,M_\lambda)$ such that $K_\lambda>M_\lambda$ and 
$ \lambda -\tfrac{1}M s \in \tfrac1{K_\lambda} \cdot A
=[\tfrac{K}{K_\lambda},\infty) \cdot [\pi(\xi_-),\pi(\xi+)]$ for all $M>M_\lambda$. 

Continuing, it suffices to prove that for $K_\lambda>M_\lambda$ we have
\Eq{E:burza}{
R_{s/M_\lambda}(\lambda) \subset [\tfrac{K}{K_\lambda},\infty) \cdot [\pi(\xi_-),\pi(\xi+)].
}
To conclude the proof take: first $M_\lambda>0$ such that $R_{s/M_\lambda}(\lambda)
\subset (0,+\infty) \cdot [\pi(\xi_-),\pi(\xi+)]$. Second, $K_\lambda>M_\lambda$ such that \eq{E:burza} holds.
\end{proof}

\begin{lem}\label{lem:pom2}
 Under the assumption of lemma~\ref{lem:pom1} suppose additionally that $g$ is positive on its domain. Then the ratio $f/g$ is an injective function.
\end{lem}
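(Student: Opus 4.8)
The plan is to prove the contrapositive in a strong form: if $f/g$ fails to be injective, then $\Lv_{F,G}$ cannot even be a repetition invariant online premean. The key observation is that injectivity of $f/g$ is equivalent to injectivity of the \emph{direction} of the curve $F$. Indeed, since $g>0$ on $I$ we may write $h:=f/g$ (well defined) and $F(x)=g(x)\cdot(h(x),1)$; thus $h(x_1)=h(x_2)$ happens exactly when $F(x_1)$ and $F(x_2)$ lie on one ray through the origin, and then, because $F$ is one-to-one (Remark~\ref{rem:1-1onto}) and $g>0$, these points are distinct positive multiples of a single vector, say $F(x_i)=t_i\,u$ with $u:=(h(x_1),1)$ and $t_i:=g(x_i)>0$, $t_1\neq t_2$. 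So it suffices to derive a contradiction from the existence of $x_1\neq x_2$ in $I$ with $F(x_i)=t_i u$, $t_1\ne t_2$, $t_1,t_2>0$.

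The ingredients I would use are: reflexivity, $G(nF(x))=x$; repetition invariance in the form $G(ny)=G(y)$ for every $y\in\omega F(I)$ and $n\in\N$ (obtained by repeating each coordinate of a vector $n$ times), so that $G$ is constant along every $\Q_+$--ray orbit that stays inside $\omega F(I)$; and the continuity of $G$ on all of $\R^2$, which is part of the hypotheses of Lemma~\ref{lem:pom1}. I would then split into two cases according to whether $t_1/t_2$ is rational. When $t_1/t_2=p/q$ with $p,q$ coprime positive integers the contradiction is immediate: $q t_1\,u=p t_2\,u$, while $G(qt_1 u)=G(qF(x_1))=\Lv_{F,G}(x_1,\dots,x_1)=x_1$ and $G(pt_2 u)=\Lv_{F,G}(x_2,\dots,x_2)=x_2$, whence $x_1=x_2$.

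When $t_1/t_2$ is irrational the argument is longer. I would set $S:=\{k t_1+m t_2:\ k,m\ge 0\text{ integers},\ k+m\ge 1\}$; by a classical fact (equidistribution of $\{m t_2\bmod t_1\}$, equivalently the three--distance theorem) $S$ is dense in a half--line $[N_0,\infty)$. Every point $su$ with $s\in S$ lies in $\omega F(I)$, and $G(su)=\Lv_{F,G}(\underbrace{x_1,\dots,x_1}_{k},\underbrace{x_2,\dots,x_2}_{m})$ for the \emph{unique} pair $(k,m)$ with $s=kt_1+mt_2$ (uniqueness follows from irrationality). Repetition invariance then gives $G(jsu)=G(su)$ for all $s\in S$, $j\in\N$. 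Now $\Phi(s):=G(su)$ is a continuous function on $[0,\infty)$ (restriction of $G$ along a ray); by density of $S$ in $[N_0,\infty)$ together with continuity, the relation $\Phi(js)=\Phi(s)$ extends from $s\in S$ to all $s\ge N_0$, then iterating it $\Phi(qs)=\Phi(s)$ for every positive rational $q$ with $qs\ge N_0$, and a final density--plus--continuity step forces $\Phi$ to be constant, say $\equiv v_0$, on $[N_0,\infty)$. But then $x_i=\Lv_{F,G}(x_i,\dots,x_i)=G(nF(x_i))=\Phi(nt_i)=v_0$ as soon as $nt_i\ge N_0$, so $x_1=x_2$, again a contradiction. (Since $f/g$ is continuous on an interval, the injectivity thus obtained also gives strict monotonicity, though only injectivity is asserted.)

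I expect the irrational case to be the main obstacle, and within it the delicate step is the passage from the purely ``lattice'' information (that $G$ is constant along integer--ray orbits inside $\omega F(I)$) to the ``continuum'' conclusion (that $G$ is constant along the whole ray), which is exactly where density of $S$ and continuity of $G$ on $\R^2$ are both essential. Two remarks on context and alternatives: the argument above uses only reflexivity, repetition invariance, continuity of $G$ and the injectivity of $F$, so the remaining standing hypotheses (``no negligible element'' and the conclusion of Lemma~\ref{lem:pom1}) are not really consumed here; and one could instead invoke Lemma~\ref{lem:Jabl} to see directly that a tail $\{tu:\ t\ge T\}$ of the critical ray lies in $\omega F(I)$ — dealing separately with the case $F(I)$ contained in a line, which repetition invariance excludes unless $I$ is degenerate — and then run the same $\Q_+$--invariance--plus--continuity argument on that tail.
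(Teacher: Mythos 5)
Your reduction is sound: since $g>0$, non-injectivity of $f/g$ produces $x_1\neq x_2$ with $F(x_i)=t_iu$ on a common ray, $t_1\neq t_2$, and your rational case ($t_1/t_2\in\Q$) is a correct, elementary contradiction via reflexivity alone; it even disposes of the situation where a level set of $f/g$ contains an interval (there $g$ is either constant, contradicting injectivity of $F$, or takes two values with rational ratio), which the paper handles separately through Proposition~\ref{prop:T1}. The irrational case, however, contains a fatal error. The set $S=\{kt_1+mt_2:\ k,m\in\Z_{\ge 0},\ k+m\ge 1\}$ is \emph{not} dense in any half-line: for every $N$ the constraints $k\le N/t_1$, $m\le N/t_2$ show that $S\cap[0,N]$ is finite, so $S$ is locally finite, hence closed and discrete. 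What equidistribution or the three-distance theorem actually yields is only that the gaps between consecutive elements of $S$ shrink to $0$ at infinity, i.e.\ $S$ is $\varepsilon$-dense in $[N_0(\varepsilon),\infty)$ with $N_0(\varepsilon)\to\infty$ as $\varepsilon\to 0$. That weaker property cannot transfer the identity $\Phi(js)=\Phi(s)$ from $s\in S$ to real $s$: pointwise continuity of $\Phi=G(\cdot\,u)$ at a fixed $t$ is of no use when the nearest point of $S$ stays at a distance bounded away from zero, and no uniform modulus of continuity at infinity is available. So the chain ``extend to all $s\ge N_0$, then to $\Q_+$-orbits, then conclude constancy'' fails at its first link, and the whole irrational case collapses.

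The repair is essentially what the paper does and what you gesture at in your closing remark: one must first exhibit a genuinely two-dimensional portion of $\omega F(I)$ --- via the result of \cite{BanJabJab19} that $X+X$ has nonempty interior, packaged here as Lemma~\ref{lem:Jabl} --- so that an actual solid tail $\{t\lambda:\ t>M_\lambda\}$ of the critical ray lies in $\omega F(I)$; on such a tail the $\Q_+$-invariance-plus-continuity argument is sound. But this is where the real work sits: the critical direction must be shown to lie in the \emph{interior} of the direction set (the paper arranges this by locating a gap $(p,q)$ of the level set, taking $r$ the smallest maximizer of $f/g$ there, and comparing a point $x_0\in(p,r)$ with vectors supported in $(r,q)$, after which the mean property $\min\le\Lv_{F,G}\le\max$ localizes the value and yields the contradiction), and the degenerate case where the relevant piece of $F(I)$ lies in a line must be treated separately. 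Your parenthetical claim that repetition invariance excludes $F(I)$ from lying in a line is false --- quasi-arithmetic means have $F(x)=(f(x),1)$ --- in that case one instead shows directly that $f/g$ is injective, as the paper does in its Case~2.
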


%
%
%

\begin{proof}
Assume that there exists $\alpha \in \R$ such that the set $A:= \{x \in I \colon f(x)/g(x)=\alpha\}$ contains more than one element. 

If $A$ contains some interval $V$ then $f(x)=\alpha g(x)$ for all $x \in V$. In particular $\Lv_{F,\cdot}$ restricted to $V$ equals $\Lv_{f,\cdot}$, i.e. it is of type $\TYPE{1}$ which lead to a contradiction as by Proposition~\ref{prop:T1} there are no means of this type. 

As $A$ is a close subset of $I$, the only remaining case is that $A$ has a gap. More precisely there exists $p,q \in A$ such that $p<q$ and $(p,q)\cap A=\emptyset$. Assume that
\Eq{*}{
\frac{f(x)}{g(x)}>\alpha=\frac{f(p)}{g(p)}=\frac{f(q)}{g(q)}\text{ for all }x \in (p,q)\:.
}
The second case, with converse inequality sign, is completely analogous.

Let $\alpha_0:=\sup_{x \in (p,q)} \frac{f(x)}{g(x)}$, and $r \in (p,q)$ be the smallest number with $\alpha_0=\tfrac{f(r)}{g(r)}$. Then

\Eq{*}{
\interior \cl (\Q_+ \cdot \omega M)
&\supseteq \interior \Big\{\big(\alpha y,y\big)\colon \alpha \in (\tfrac fg)(r,q),\,y \in \R_+\Big\}
=\interior \Big\{\big(\alpha y,y\big)\colon \alpha \in (\tfrac fg)(p,q),\,y \in \R_+\Big\}\\
&=\interior \Big\{\big(\tfrac{f(x)}{g(x)}y,y\big)\colon x \in (p,q),\,y \in \R_+\Big\}
=\interior \Big\{\big(f(x)y,g(x)y\big)\colon x \in (p,q),\,y \in \R_+\Big\}.
}

Fix $x_0 \in (p,r)$. Then $(f(x_0),g(x_0)) \in \interior \cl (\Q_+ \cdot  \omega M)$. Consider two cases.

\bigskip

\noindent {\sc Case 1.}
If $M$ is not a line then by Lemma~\ref{lem:Jabl} there exists $k\in \N$ such that $(kf(x_0),kg(x_0)) \in \omega M$. By the definition $ (kf(x_0),kg(x_0)) \in nM$ for some $n \in \N$. In the other words there exists a vector $(v_1,\dots,v_n)$ of elements in $(r,q)$ such that 
\Eq{*}{
k\cdot F(x_0)= k\cdot (f(x_0),g(x_0))= \sum_{i=1}^n (f(v_i),g(v_i))=\sum_{i=1}^n F(v_i)\:.
}
Using the definition of online premean, it implies 
\Eq{*}{
(r,q)\ni \Lv_{F,G}(v_1,\dots,v_n)=\Lv_{F,G}(x_0,\dots,x_0)=x_0 \in (p,r),
}
which lead to a contradiction as $(r,q) \cap (p,r)=\emptyset$.
\bigskip

\noindent {\sc Case 2.} If $M$ is a line then there exist $A,B,C \in \R$ such that 
\Eq{*}{
A f(x) + B g(x)+C=0 \text{ for all } x \in (p,q)\,.\\
}
If $A=0$ or $B=0$ then either $g$ or $f$ is constant on $(p,q)$. It implies that the second function is continuous and strictly monotone, so is the ratio $f/g$. It implies $\tfrac{f(p)}{g(p)} \ne\tfrac{f(q)}{g(q)}$ which lead to a contradiction.

If both $A$ and $B$ are nonzero then: First, both $f$ and $g$ are 1-1 in $(p,q)$ (and therefore strictly monotone). Second,
\Eq{*}{
\frac{f(x)}{g(x)}=\frac{-Bg(x)-C}{g(x)}=-B-\frac C{g(x)} \quad 
\text{ for }x \in (p,q).
}
It implies that $f/g$ restricted to $(p,q)$ is 1-1, i.e $\tfrac{f(p)}{g(p)} \ne \tfrac{f(q)}{g(q)}$ in this case too.
\end{proof}

\begin{lem}\label{lem:posinc}
 Under the assuption of lemma~\ref{lem:pom1} there exists an invertible linear mapping $L \colon \R^2 \to \R^2$ such that for a pair of functions $f_1,g_1\colon I \to \R$ defined by $(f_1,g_1):= L \circ (f,g)$ we have:
 (i) $g_1$ is positive and (ii) $f_1/g_1$ is strictly increasing.
\end{lem}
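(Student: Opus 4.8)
The plan is to obtain $L$ as a composition of two invertible linear maps, using Lemmas~\ref{lem:pom1} and~\ref{lem:pom2} together with Remark~\ref{rem:inv}. First I would invoke Lemma~\ref{lem:pom1} to get $\alpha,\beta\in\R$ for which $\alpha f+\beta g$ is nowhere vanishing on $I$. Since $I$ is an interval and $\alpha f+\beta g$ is continuous, the intermediate value theorem forces $\alpha f+\beta g$ to have constant sign, and after replacing $(\alpha,\beta)$ by $(-\alpha,-\beta)$ if necessary I may assume $\alpha f+\beta g>0$ on $I$. In particular $(\alpha,\beta)\ne(0,0)$, so the linear map $L_0(u,v):=(-\beta u+\alpha v,\ \alpha u+\beta v)$ has determinant $\alpha^2+\beta^2>0$ and is invertible; setting $(f_0,g_0):=L_0\circ(f,g)$ yields a continuous pair with $g_0=\alpha f+\beta g>0$ throughout $I$.

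Next I would transfer the hypotheses to this new pair. Writing $F_0:=L_0\circ F$ and letting $G_0$ be $G\circ L_0^{-1}$ restricted to $\omega F_0(I)=L_0(\omega F(I))$, Remark~\ref{rem:inv} gives $\Lv_{F_0,G_0}=\Lv_{F,G}$, so $\Lv_{F_0,G_0}$ is again a repetition invariant mean without negligible element, with $F_0,G_0$ continuous and $F_0=(f_0,g_0)$. Hence $(f_0,g_0)$ satisfies all the hypotheses of Lemma~\ref{lem:pom1}, and since additionally $g_0$ is positive, Lemma~\ref{lem:pom2} applies and gives that the ratio $f_0/g_0$ is injective. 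As $g_0$ is continuous and nowhere zero, $f_0/g_0$ is continuous on the interval $I$; being a continuous injection of an interval, it is strictly monotone.

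Finally, if $f_0/g_0$ is strictly increasing I am done with $L:=L_0$, $(f_1,g_1):=(f_0,g_0)$. Otherwise $f_0/g_0$ is strictly decreasing, and I compose with the invertible map $L_1(u,v):=(-u,v)$; then $(f_1,g_1):=L_1\circ(f_0,g_0)=(-f_0,g_0)$ satisfies $g_1=g_0>0$ and $f_1/g_1=-f_0/g_0$ is strictly increasing, so $L:=L_1\circ L_0$ works.

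I do not expect a genuine obstacle: the argument is a short assembly of the two preceding lemmas. The only point demanding a little care is the verification that the linearly transformed data $(F_0,G_0)$ still meets the standing hypotheses, so that Lemmas~\ref{lem:pom1} and~\ref{lem:pom2} may legitimately be reapplied to it — and this is precisely what Remark~\ref{rem:inv} supplies.
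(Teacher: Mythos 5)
Your proof is correct and follows essentially the same route as the paper: take the nowhere-vanishing combination $\alpha f+\beta g$ from Lemma~\ref{lem:pom1}, make it positive by a sign, pair it with a linearly independent combination, invoke Lemma~\ref{lem:pom2} (justified via Remark~\ref{rem:inv}) to get injectivity and hence strict monotonicity of the ratio, and correct the direction by a final sign flip. The only slip is trivial: $\det\begin{psmallmatrix}-\beta&\alpha\\ \alpha&\beta\end{psmallmatrix}=-(\alpha^2+\beta^2)$, not $\alpha^2+\beta^2$, but it is nonzero either way, so invertibility — all that is needed — still holds.
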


\begin{proof}
 In view of lemma~\ref{lem:pom1} there exists $\alpha,\beta \in \R$ such that $\alpha f+\beta g$ is nowhere vanishing. Define $p:=1$ if $\alpha f+\beta g$ is positive and $p:=-1$ otherwise.
 
 Take any vector $(\gamma,\delta)$ which is linealrly independent with $(\alpha,\beta)$. Applying Lemma~\ref{lem:pom2} with $f \leftarrow \gamma f+\delta g$ and $g \leftarrow \alpha f+\beta g$ we obtain that $\frac{\gamma f+\delta g}{\alpha f+\beta g}$ is strictly monotone. Take $q:=1$ if it is increasing and $q:=-1$ otherwise. Let $L:=\begin{psmallmatrix}pq\gamma&pq\delta\\p\alpha&p\beta\end{psmallmatrix}$.
 
 Then $\det L=q \cdot \det\begin{psmallmatrix}
                     \gamma & \delta \\ \alpha & \beta 
                    \end{psmallmatrix} \ne 0
$. Furthermore we can easily verify that both (i) and (ii) holds true for a pair $f_1:=pq(\gamma f+\delta g)$ and $g_1:=p(\alpha f+\beta g)$.
\end{proof}

\subsection{Proof of Theorem~\ref{thm:RepInvT2}}
Let $\Lv_{F,G}$ be an arbitrary repetition invariant mean on $I$ of type $\TYPE{2}$ without negligable element, where $F(x)=(f(x),g(x))$ for $f,g\colon I \to \R$ and $G \colon \omega F(I) \to I$.

By Lemma~\ref{lem:posinc} and Remark~\ref{rem:inv} we may assume without loss of generality that $g$ is positive and $f/g$ is strictly increasing, that is Bajraktarevi\'c mean $\B_{f,g}$ is well defined.

Take $n \in \N$, $a \in I^n$, and define $m:=\B_{f,g}(a)$. We shall prove that $\Lv_{F,G}(a)=m$. Let

\Eq{*}{
\theta=\theta(m):=\frac{f(m)}{g(m)}= \frac{f(a_1)+f(a_2)+\dots+f(a_n)}{g(a_1)+g(a_2)+\dots+g(a_n)}.
}
Then
\Eq{*}{
\theta(g(a_1)+g(a_2)+\dots+g(a_n))&=f(a_1)+f(a_2)+\dots+f(a_n),\\
(f(a_1)-\theta g(a_1))+\cdots+(f(a_n)-\theta g(a_n))&=0
}

If now replace $f$ by $f_m:=f-\theta g$ and consider $F_m=(f_m,g)$ (and related function $G_m$ so that $\Lv_{F,G}=\Lv_{F_m,G_m}$) we obtain
\Eq{*}{
\sum_{i=1}^n (f_m(a_i),g(a_i))=\big(0,\sum_{i=1}^n g(a_i) \big)\:.
}
As $f_m(m)=0$ we get $G_m(0,ng(m))=m$ for all $n \in \N$. As $f_m/g$ is strictly increasing we get that $f_m$ changes its sing in a neighbourhood of $m$. In particular for all $\varepsilon>0$ there exists $u,u'\in I$ such that $u<u'$, $s \in (u,u')$,  $f_m(u)=-f_m(u')$, and $u'-u<\varepsilon$. Thus, by the definition of $\Lv$ we have
\Eq{*}{
G_m\big(0,ng(m)+k\big(g(u)+g(u')\big)\big) \in (u,u')\quad \text{ for all }n,\,k \in \N\:.
}

If $f_m=\alpha g$ in a neighbourhood of $m$ then $f=(\alpha+\theta)g$, i.e. the mean restricted to this neighbourhood is of type $\TYPE{1}$ what lead to a contradiction.

Otherwise one can find a pair $(v,v')$ ($u<v<s<v'<u'$) such that $f_m(v)=-f_m(v')$ and 
$g(v)+g(v')\ne g(u)+g(u')$. Then we have
\Eq{E:G_s}{
G_s\big(0,g(s)+k\big(g(v)+g(v')\big)\big) \in (v,v')\subset (u,u')\quad \text{ for all }k \in \N\:.
}

Consider two cases (recall that $m$ is fixed).

\bigskip

\noindent {\sc Case 1.}
$g=C+D f_m$ for some $C, D \in \R$ on some interval $X_m \ni m$. 

Assume that $X_m$ is a maximal interval with this property. Let $\bar G(p,q):=G_m(p,qC+Dp)$, $n \in \N$, and $x =(x_1,\dots,x_n)\in X_m^n$
\Eq{*}{
\Lv_{F_m,G_m}(x)&=G_m\Big(\sum_{i=1}^n f_m(x_i),\sum_{i=1}^n g(x_i)\Big)\\
&=G_m\Big(\sum_{i=1}^n f_m(x_i),nC+\sum_{i=1}^n Df_m(x_i)\Big)=\bar G\Big(\sum_{i=1}^n f_m(x_i),n\Big).
}
which in view of (the proof of) Proposition~\ref{prop:T1+}
implies 
\Eq{*}{\Lv_{F,G}(x)=\Lv_{F_m,G_m}(x)=\QA{f_m}(x)=\B_{f_m,1}(x)=\B_{f_m,g}(x)=\B_{f,g}(x) \qquad (x \in X_m^n).
}
Therefore $G(p,q)=(\tfrac fg)^{-1}(\frac pq)$ for $(p,q) \in \omega F(X_m)$. Since $f_m=f-\theta g$, there exist $p,q \in \R$ such that $g(x)=p+qf(x)$ for all $x \in X_m$.

If $X_m=I$ then, in view of Proposition~\ref{prop:T1+}, we have $\Lv_{F,G}(a)=\QA{f}(a)=\B_{f,g}(a)=m$.

Otherwise we have $\Lv_{F,G}|_{X_m}=\QA{f}|_{X_m}=\B_{f,g}|_{X_m}$.  In particular $G(p,q)=(\tfrac fg)^{-1}(\tfrac pq)$ for all $(p,q) \in \omega F(X_m)$
such that $(\tfrac fg)^{-1}(\tfrac pq)=m$.
Applying this for all $m\in X_m$ we obtain
\Eq{Erh}{
G(p,q)=(\tfrac fg)^{-1}(\tfrac pq)\text{ for all }(p,q) \in \omega F(X_m).
}

As $\Lv_{F,G}$ is repetition invariant we have
\Eq{Eri}{
G(tp,tq)=G(p,q)\text{ for all } t \in \Q,\,(p,q)\in \omega F(I) \text{ with }(tp,tq)\in \omega F(I).
}

Since $F(I)$ is not a segment we obtain by Lemma~\ref{lem:Jabl}, we get 
\Eq{Erj}{
(p,q) \in F(I) \Rightarrow \exists_{M_0>0} \forall_{M>M_0} (Mp,Mq)\in \omega F(I)
}

Thus, as $G$ is continuous, we have 
\Eq{*}{
G(x,y)=\lim_{\substack{t \to \infty\\(tx,ty)\in \omega F(I)}} G(tx,ty)\qquad\text{ for all }(x,y)\in \omega F(I)\:.
}

Binding this property with \eq{Erh}, \eq{Eri}, and \eq{Erj} we have (see also \cite{JarPas19})
\Eq{*}{
G(p,q)=(\tfrac fg)^{-1}(\tfrac pq)\text{ for all } (p,q) \in (\R_+ \cdot \omega F(X_m)) \cap \omega F(I).
}

It implies that $G(p,q)$ equals $(\tfrac fg)^{-1}(\tfrac pq)$ on every semiline determined by some element of $\omega F(X_m)$. Thus 
$G(p,q)=m$  for all $(p,q) \in \omega F(I)$ with $(\tfrac fg)^{-1}(\tfrac pq)=m$. Thus $\Lv_{F,G}(a)=m$.

\noindent {\sc Case 2.} $f$ and $g$ are linearly independent in every neighbourhood of $m$.

There exist two sequences $(v_r)_{r \in \R_+}$ and $(v_r')_{n \in \R_+}$ of points in $I$ such that:
\begin{itemize}
\item The mapping $r \mapsto (v_r,v_r') $ is continuous;
\item $(v_r)$ is increasing, and $(v_r')$ is deceasing;
\item $\lim_{r \to \infty} v_r= \lim_{r \to \infty} v_r'=m$;
  \item $v_r<m<v_r'$ for all $r \in \R_+$;
\item $f(v_r) = -f(v_r')$ for all $r \in \R_+$.
\end{itemize}

Then we obtain that
\Eq{*}{
R:=\{r \in \R_+ \colon g(v_r)+g(v_r')\ne2g(m)\}
}
is a dense subset of $\R_+$.
Define, for each $r \in R$, an open interval
\Eq{*}{
P_r:=\Big(\min\big(g(v_r)+g(v_r'),2g(m)\big),\max\big(g(v_r)+g(v_r'),2g(m)\big)\Big).
}
Fix $r_0 \in R$ arbitrarily. Applying \eq{E:G_s} with $(v,v') \leftarrow (v_r,v_r')$ for all $r \in (r_0,+\infty)$ simultaneously, we obtain
\Eq{*}{
G_m(0,g(m)+k(g(v_r)+g(v_r'))) &\in (v_r,v_r') \subset (v_{r_0},v'_{r_0}) \qquad\qquad &(k\in \N,\,r\in (r_0,+\infty)\:).}
Moreover, as $g$ is continuous we obtain that the mapping $(r_0,\infty) \ni r \mapsto g(v_r)+g(v'_r)$ is onto $P_{r_0}$, thus
\Eq{*}{
G_m(0,g(m)+kx) &\in (v_{r_0},v_{r_0}')  \qquad\qquad &(k\in\N,\,x\in P_{r_0}).
}

But, as $P_{r_0}$ is an open interval and $\sup P_{r_0} \ge2g(m)>0$, there exists $K_{r_0} \in \R$ such that $(K_{r_0},\infty) \subset g(m)+\N \cdot P_{r_0}$. It~implies $G_m(0,z) \in (v_{r_0},v_{r_0}')$ for all $z >K_{r_0}$ and, as an immediate result,
\Eq{*}{
v_{r_0} \le \liminf_{z \to \infty} G_m(0,z) 
\le\limsup_{z \to \infty} G_m(0,z) \le v_{r_0}'.
}
Upon passing a limit $r_0 \to \infty$, $r_0 \in R$ (recall that $R$ is a dense subset of $\R_+$) we obtain
%
\Eq{*}{
\lim_{z \to \infty} G_m(0,z)=m\:.
}

But, in view of repetition invariance of $\Lv_{F_m,G_m}$ on $I$, we have $G_m(kx,ky)=G_m(x,y)$ for all $(x,y) \in \omega F_m(I)$ and $k \in \N$ so
\Eq{*}{
G_m(F_m(a_1)+\dots+F_m(a_n))
=G_m(0,\sum_{i=1}^n g(a_i))
=\lim_{m \to \infty} G_m(0,m\cdot \sum_{i=1}^n g(a_i))
=\lim_{z \to \infty} G_m(0,z)=m.
}
therefore $\Lv_{F_m,G_m}(a)=m$. Now, as $\Lv_{F,G}=\Lv_{F_m,G_m}$, we get 
$\Lv_{F,G}(a)=\Lv_{F_m,G_m}(a)=m$.

\end{document}